\documentclass[reqno,10pt]{amsart}
\usepackage{amsmath,amsthm,enumerate,graphics,amsfonts,latexsym,amsopn,verbatim,amscd,amssymb}
\usepackage{color}
\textwidth=435pt
\oddsidemargin=17pt
\evensidemargin=17pt

\theoremstyle{plain}
\newtheorem{thm}{Theorem}[section]

\newtheorem{lem}[thm]{Lemma}
\newtheorem{cor}[thm]{Corollary}
\newtheorem{prop}[thm]{Proposition}

\newtheorem{problem}{Theorem}[section] 
\newtheorem{prob}[problem]{Problem}

\theoremstyle{definition}

\begin{document}
\title[Solvable graphs of  finite groups]{A note on solvable graphs of  finite groups}

\author[P. Bhowal, D. Nongsiang and R. K. Nath]
{P. Bhowal,    D. Nongsiang and R. K. Nath*}

\address{Parthajit Bhowal, Department of Mathematical Sciences, Tezpur University, Napaam-784028, Sonitpur, Assam, India.}

\email{bhowal.parthajit8@gmail.com}

%\author[D. Nongsiang]{D. Nongsiang}
\address{Deiborlang Nongsiang, Department of Mathematics, Union Christian College, Umiam-793122, Meghalaya, India.}

\email{ndeiborlang@yahoo.in}

\address{Rajat Kanti Nath, Department of Mathematical Sciences, Tezpur University, Napaam-784028, Sonitpur, Assam, India.}

\email{rajatkantinath@yahoo.com}

%\author[]{  Parthajit Bhowal}

\begin{abstract}
Let $G$ be a finite non-solvable group with solvable radical $Sol(G)$.  The solvable graph $\Gamma_s(G)$ of $G$ is a graph  with vertex set $G\setminus Sol(G)$ and two distinct vertices $u$ and $v$ are adjacent if and only if $\langle u, v \rangle$ is solvable.  We show that $\Gamma_s (G)$ is not a star graph, a tree, an $n$-partite graph for any positive integer $n \geq 2$ and not a regular graph for any non-solvable finite group $G$. We compute the girth of $\Gamma_s (G)$ and derive a lower bound of the clique number of $\Gamma_s (G)$. We prove the non-existence of finite non-solvable groups whose solvable graphs are planar, toroidal, double-toroidal, triple-toroidal or projective. We conclude the paper by obtaining a relation between  $\Gamma_s (G)$ and the solvability degree of $G$.
\end{abstract}

\thanks{*Corresponding author}
\subjclass[2010]{Primary 20D60; Secondary 05C25}
\keywords{Solvable graph; genus; solvability degree;  finite group}

\maketitle

\section{Introduction} \label{S:intro}
Let $G$ be a finite group and $u \in G$. The solvabilizer of $u$, denoted by $Sol_G(u)$, is the set given by $\{v \in G : \langle u, v \rangle \text{ is solvable}\}$. Note that the centralizer $C_G(u) := \{v \in G : uv = vu\}$  is a subset of $Sol_G(u)$ and hence the center $Z(G) \subseteq Sol_G(u)$ for all $u \in G$. By \cite[Proposition 2.13]{hr}, $|C_G(u)|$ divides $|Sol_G(u)|$ for all $u \in G$ though  $Sol_G(u)$ is not a subgroup of $G$ in general. A group $G$ is called a S-group if $Sol_G(u)$ is  a subgroup of $G$ for all $u \in G$. A finite group $G$ is a S-group if and only if it is solvable (see \cite[Proposition 2.22]{hr}). Many other properties of $Sol_G(u)$ can be found in \cite{hr}.  
We write
$Sol(G) = \{u \in G : \langle u, v \rangle \text{ is solvable for all } v \in G\}$. It is easy to see that $Sol(G) = \underset{u \in G}{\cap} Sol_G(u)$. Also, $Sol(G)$ is the solvable radical of $G$ (see \cite{gkps}).  The  solvable graph of a finite non-solvable group $G$ is a simple undirected graph whose vertex set is $G \setminus Sol(G)$,  and two vertices $u$ and $v$ are adjacent if $\langle u, v \rangle$ is a solvable. We write  $\Gamma_s (G)$ to denote this graph. It is worth mentioning that $\Gamma_s (G)$ is  the complement of the non-solvable graph of $G$ considered in \cite{hr, akbari18} and extension of commuting and nilpotent graphs of finite groups that are studied extensively in \cite{Ab06,aam2, AF14,amr06, bbhR09,darafsheh, das2, das2015,dutta-18,jdutta-17-1,jdutta-17-2,jdutta-18,Deibor-17,Talebi}. It is worth mentioning that the study of commuting graphs of finite groups is originated from  a question posed by Erd\"os \cite{neu}. 

  In this  paper, we show that $\Gamma_s (G)$ is not a star graph, a tree, an $n$-partite graph for any positive integer $n \geq 2$ and not a regular graph for any non-solvable finite group $G$. In Section 2, we also show that the girth of $\Gamma_s (G)$ is $3$ and the clique number of $\Gamma_s (G)$ is greater than or equal to $4$. In Section 3, we first show that for a given non-negative integer $k$, there are at the most finitely many finite non-solvable groups whose solvable graph have genus $k$. We also show that there is no finite non-solvable group, whose solvable graph is planar, toroidal, double-toroidal, triple-toroidal or projective. We conclude the paper by obtaining a relation between  $\Gamma_s (G)$ and $P_s(G)$ in Section 4, where $P_s(G)$ is the probability that a randomly chosen pair of elements of $G$ generate a solvable group (see \cite{gW2000}). 

%The motivation for this paper comes from \cite{das2} and \cite{das2015} where commuting and nilpotent graphs of finite groups have been considered. 

%\cite{mai}, \cite{wang}, \cite{wic1} and \cite{wic2}, where similar problems for certain graphs associated to finite rings have been addressed.

The reader may refer to \cite{west} and \cite{whit} for various standard graph theoretic terminologies. For any subset $X$ of the vertex set of a graph $\Gamma$, we write $\Gamma[X]$ to denote the induced subgraph of $\Gamma$ on $X$. The  girth of    $\Gamma$ is the minimum of the lengths of all cycles in     $\Gamma$, and is denoted by ${\rm girth}(\Gamma)$.
We write $\omega (\Gamma)$ to denote the clique number of $\Gamma$ which is the least upper bound of the sizes of all the cliques of $\Gamma$.
The smallest non-negative integer $k$ is called the  genus of a graph $\Gamma$ if  $\Gamma$  can be embedded on the surface obtained by attaching $k$ handles to a sphere.  Let $\gamma(\Gamma)$ be the genus of $\Gamma$. Then, it is clear that $\gamma(\Gamma) \geq \gamma(\Gamma_0)$ for any subgraph $\Gamma_0$ of $\Gamma$.
Let $K_n$ be the complete graph on $n$ vertices and $mK_n$ the disjoint union of $m$ copies of $K_n$. 
It was proved in \cite[Corollary 1]{bhky} that $\gamma(\Gamma) \geq \gamma(K_m) + \gamma(K_n)$ if $\Gamma$ has two disjoint subgraphs isomorphic to $K_m$ and $K_n$. Also, by \cite[Theorem 6-38]{whit} we have
\begin{equation} \label{kn}
\gamma(K_{n})=\left\lceil \frac{(n-3)(n-4)}{12}\right
\rceil \text{ if } n \geq 3.
\end{equation}
\noindent A graph $\Gamma$ is called planar, toroidal, double-toroidal and triple-toroidal if $\gamma(\Gamma) = 0, 1, 2$ and $3$ respectively.

Let $N_k$ be the connected sum of $k$ projective planes. 
%The number $k$ is called the crosscap of $N_k$. 
A simple graph which can be embedded in $N_k$ but not in $N_{k-1}$, is called a graph of crosscap $k$. The notation $\bar{\gamma}(\Gamma)$ stand for the crosscap of a graph $\Gamma$. It is easy to see that $\bar{\gamma}(\Gamma)\geq \bar{\gamma}(\Gamma_0)$ for any subgraph $\Gamma_0$ of $\Gamma$. It was shown in \cite{bou} that 
\begin{equation}\label{kbmn}
\bar{\gamma}(K_n)= \begin{cases} 
      \lceil\frac{1}{6}(n-3)(n-4)\rceil & \text{ if } n\geq 3 \text{ and } n\neq 7, \\
      3 & \text{ if } n = 7. \\
   \end{cases}
\end{equation}
\noindent A graph $\Gamma$ is called a  projective graph if $\bar{\gamma}(\Gamma) = 1$. It is worth mentioning that $2K_5$ is not projective graph (see \cite{ghw}).

\section{Graph realization}

We begin with the following lemma.

\begin{lem}\label{degree-lem}
For every $u\in G\setminus Sol(G)$ we have
\begin{center}
    $deg(u)=|Sol_G(u)|-|Sol(G)|-1$.
\end{center}
\end{lem}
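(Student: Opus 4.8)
The plan is to count directly the neighbors of a fixed vertex $u \in G\setminus Sol(G)$. By the definition of $\Gamma_s(G)$, a vertex $v$ is adjacent to $u$ precisely when $v$ lies in the vertex set $G\setminus Sol(G)$, is distinct from $u$, and satisfies that $\langle u,v\rangle$ is solvable. Since $Sol_G(u) = \{v\in G : \langle u,v\rangle \text{ is solvable}\}$, the collection of \emph{all} group elements that form a solvable pair with $u$ is exactly $Sol_G(u)$, of cardinality $|Sol_G(u)|$. The entire argument then reduces to identifying and discarding those elements of $Sol_G(u)$ that fail to be neighbors of $u$.

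First I would record the containment $Sol(G)\subseteq Sol_G(u)$, which is immediate from the identity $Sol(G)=\cap_{w\in G} Sol_G(w)$ noted in the introduction. Consequently all $|Sol(G)|$ elements of $Sol(G)$ sit inside $Sol_G(u)$; but none of them are vertices of $\Gamma_s(G)$, so each must be removed from the count. Next, $u$ itself belongs to $Sol_G(u)$, because $\langle u\rangle$ is cyclic and hence solvable; since $u$ is not adjacent to itself, it too must be removed. Thus the neighbors of $u$ are precisely the elements of $Sol_G(u)\setminus\bigl(Sol(G)\cup\{u\}\bigr)$.

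The only point requiring verification, to make the subtraction exact, is that the two removed pieces are disjoint, i.e. that $u\notin Sol(G)$. This is guaranteed by the hypothesis $u\in G\setminus Sol(G)$, so $\{u\}$ and $Sol(G)$ are disjoint subsets of $Sol_G(u)$. Therefore $|Sol(G)\cup\{u\}| = |Sol(G)|+1$, and counting gives $deg(u) = |Sol_G(u)| - |Sol(G)| - 1$, as claimed. There is no genuine obstacle here: the statement is a careful inclusion–exclusion on the set $Sol_G(u)$, the substantive inputs being the containment $Sol(G)\subseteq Sol_G(u)$ and the observations that $u\in Sol_G(u)$ while $u\notin Sol(G)$.
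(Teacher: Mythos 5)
Your proof is correct and follows essentially the same route as the paper's: count the elements of $Sol_G(u)$, then remove $u$ itself and the subset $Sol(G)$ (using $Sol(G)\subseteq Sol_G(u)$ and $u\notin Sol(G)$ to make the subtraction exact). Your version is slightly more explicit about the disjointness check, but the argument is the same.
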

\begin{proof}
Note that $deg(u)$ represents the number of vertices from $G\setminus Sol(G)$ which are adjacent to $u$. Since $u\in Sol_G(u)$, therefore $|Sol_G(u)|-1$ represents the number of vertices which are adjacent to $u$. Since we are excluding $Sol(G)$ from the vertex set therefore $deg(u)=|Sol_G(u)|-|Sol(G)|-1$.
\end{proof}
\begin{prop}\label{star}
$\Gamma_s(G)$ is not a star.
\end{prop}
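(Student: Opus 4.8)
The plan is to exploit the symmetry of $\Gamma_s(G)$ under conjugation. The crucial observation is that a star on at least three vertices has a distinguished vertex, its centre, which is the \emph{unique} vertex of degree exceeding $1$. Since $G$ acts on $\Gamma_s(G)$ by graph automorphisms (via conjugation), and any automorphism must fix this centre, the centre would be forced to be a central element of $G$; this is impossible, because central elements lie in $Sol(G)$ and therefore are not vertices of the graph at all.

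Concretely, I would proceed as follows. First I would record that $Z(G) \subseteq Sol(G)$: if $z \in Z(G)$ then for every $v \in G$ the subgroup $\langle z, v \rangle$ is abelian, hence solvable, so $z \in Sol(G)$. Next I would verify that conjugation by any $g \in G$ induces an automorphism of $\Gamma_s(G)$. Since $Sol(G)$ is a normal subgroup of $G$, the map $x \mapsto gxg^{-1}$ permutes $G \setminus Sol(G)$; and because $\langle gxg^{-1}, gyg^{-1} \rangle = g\langle x, y \rangle g^{-1}$ is solvable exactly when $\langle x, y \rangle$ is, this permutation preserves adjacency. Thus $G$ acts on $\Gamma_s(G)$ by graph automorphisms. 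I would then check that $\Gamma_s(G)$ has at least three vertices, so that any star structure is $K_{1,n}$ with $n \geq 2$: if $|G \setminus Sol(G)| \leq 2$ then $|Sol(G)|$ divides $|G| - |Sol(G)| \leq 2$, forcing $|Sol(G)| \in \{1,2\}$ and $|G| \leq 4$, contradicting non-solvability.

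Now suppose for contradiction that $\Gamma_s(G)$ is a star, say $K_{1,n}$ with $n \geq 2$ and centre $c$. In such a graph $c$ is the unique vertex of degree greater than $1$ (the centre has degree $n \geq 2$, while every leaf has degree $1$). Graph automorphisms preserve degrees, so every automorphism fixes $c$; in particular $gcg^{-1} = c$ for all $g \in G$, that is, $c \in Z(G) \subseteq Sol(G)$. This contradicts $c$ being a vertex of $\Gamma_s(G)$, and the argument is complete.

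I do not expect a serious obstacle here; the points needing care are the justification that conjugation is a genuine graph automorphism (which rests on normality of $Sol(G)$) and the elimination of degenerate stars on one or two vertices. If one prefers an argument built directly on Lemma~\ref{degree-lem}, a variant is available, using that each leaf $u$ satisfies $Sol_G(u) = Sol(G) \cup \{u, c\}$ together with the fact that any two involutions generate a solvable (dihedral) subgroup, but the symmetry argument is cleaner, so I would present that as the main line.
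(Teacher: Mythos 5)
Your proof is correct, but it takes a genuinely different route from the paper's. The paper argues directly from Lemma~\ref{degree-lem}: the centre $u$ of a star is adjacent to every other vertex, so $\deg(u)=|G|-|Sol(G)|-1$, which forces $|Sol_G(u)|=|G|$ and hence $u\in Sol(G)$, contradicting that $u$ is a vertex. That argument is three lines, needs no case analysis, and in fact proves the stronger statement that $\Gamma_s(G)$ has no dominating vertex --- a fact the paper reuses implicitly in Proposition~\ref{comp} to conclude that both parts of a complete bipartition have size at least $2$. Your argument instead exploits that conjugation acts on $\Gamma_s(G)$ by graph automorphisms (valid, since $Sol(G)$ is normal and $\langle gxg^{-1},gyg^{-1}\rangle=g\langle x,y\rangle g^{-1}$), so the unique vertex of degree exceeding $1$ in a star $K_{1,n}$ with $n\geq 2$ must be fixed by all conjugations, landing it in $Z(G)\subseteq Sol(G)$. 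This is sound, and your disposal of the degenerate cases $|G\setminus Sol(G)|\leq 2$ (forcing $|G|\leq 4$, hence solvable) is correct; what the symmetry argument buys is a reusable observation --- no conjugation-invariantly distinguished vertex can exist in $\Gamma_s(G)$ --- at the cost of extra bookkeeping that the degree computation avoids entirely.
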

\begin{proof}
Suppose for a contradiction $\Gamma_s(G)$ is a star. Let $|G|-|Sol(G)|=n$. Then there exists $u\in G\setminus Sol(G)$ such that $deg(u)=n-1$. Therefore, by Lemma \ref{degree-lem}, $|Sol_G(u)|=|G|$. This gives $u\in Sol(G)$, a contradiction. Hence, the result follows.
\end{proof}
\begin{prop} \label{comp}
$\Gamma_s(G)$ is not complete bipartite.
\end{prop}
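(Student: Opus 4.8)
The plan is to show that $\Gamma_s(G)$ contains a triangle; since every complete bipartite graph is bipartite and hence triangle-free (its clique number is at most $2$), this immediately rules out $\Gamma_s(G)$ being complete bipartite. The crucial observation is that if $C$ is any cyclic subgroup of $G$, then the elements of $C\setminus Sol(G)$ pairwise commute and so pairwise generate abelian, in particular solvable, subgroups; hence the set $C\setminus Sol(G)$ induces a clique in $\Gamma_s(G)$. Consequently it suffices to produce a single element $u\in G\setminus Sol(G)$ for which $|\langle u\rangle\setminus Sol(G)|\ge 3$, since any three such elements form a triangle.

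To find such a $u$, I would pass to the quotient $\bar G=G/Sol(G)$. As $G$ is non-solvable and $Sol(G)$ is its solvable radical, $\bar G$ is a nontrivial non-solvable group. By Burnside's $p^aq^b$-theorem a non-solvable group cannot have order divisible by only two primes, so $|\bar G|$ has at least three distinct prime divisors; since the two smallest primes are $2$ and $3$, some prime $p\ge 5$ divides $|\bar G|$. By Cauchy's theorem $\bar G$ has an element $\bar u$ of order $p$, and I take $u\in G$ to be any preimage of $\bar u$. Then $u\notin Sol(G)$, and the image of $\langle u\rangle$ in $\bar G$ equals $\langle\bar u\rangle$, which has order $p$; hence $[\langle u\rangle:\langle u\rangle\cap Sol(G)]=p$ and therefore
\[
|\langle u\rangle\setminus Sol(G)|=|\langle u\rangle|-|\langle u\rangle\cap Sol(G)|=|\langle u\rangle|\Bigl(1-\tfrac1p\Bigr)\ge p-1\ge 4 .
\]
By the observation of the first paragraph these (at least four) elements form a clique, so $\Gamma_s(G)$ contains a triangle and the result follows.

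I expect the only genuine obstacle to be the group-theoretic input guaranteeing an element of sufficiently large order outside $Sol(G)$: a prime $p\ge 5$ must divide $|G/Sol(G)|$, and this is precisely where non-solvability is essential, Burnside's theorem supplying it. Everything else is routine verification of the first-isomorphism-theorem computation and of the fact that $\langle u\rangle\setminus Sol(G)$ is a clique. I note in passing that this same construction already yields $\omega(\Gamma_s(G))\ge 4$ and ${\rm girth}(\Gamma_s(G))=3$, so if those facts are established beforehand one may instead finish in one line by invoking the triangle-freeness of complete bipartite graphs; I have chosen the self-contained route above so as not to presuppose them.
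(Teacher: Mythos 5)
Your proof is correct, but it takes a genuinely different route from the paper. The paper argues directly with the bipartition: it picks adjacent vertices $u\in A_1$, $v\in A_2$, observes that any third vertex of the solvable subgroup $\langle u,v\rangle Sol(G)$ outside $Sol(G)$ would be adjacent to both $u$ and $v$ and hence could lie in neither part, and then squeezes the resulting structural constraints ($|Sol(G)|\le 2$, $\langle u,v\rangle$ essentially cyclic of order $3$ or $v=uz$) down to $|A_1|=|A_2|=1$, contradicting the earlier result that $\Gamma_s(G)$ is not a star. You instead exhibit a $K_4$ inside $\Gamma_s(G)$ and invoke triangle-freeness of bipartite graphs: Burnside's $p^aq^b$-theorem gives a prime $p\ge 5$ dividing $|G/Sol(G)|$, Cauchy gives a preimage $u$ with $[\langle u\rangle:\langle u\rangle\cap Sol(G)]=p$, and the at least $p-1\ge 4$ elements of $\langle u\rangle\setminus Sol(G)$ form a clique. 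All steps check out (in particular the correspondence $\langle u\rangle/(\langle u\rangle\cap Sol(G))\cong\langle\bar u\rangle$ and the fact that two elements of a cyclic group generate a solvable group). Your argument is shorter and self-contained, and, as you note, it simultaneously yields ${\rm girth}(\Gamma_s(G))=3$ and $\omega(\Gamma_s(G))\ge 4$ — facts the paper proves separately (Theorems 2.7 and 2.8) via a weaker existence lemma ($x$ with $x,x^2\notin Sol(G)$) plus Burnside; on the other hand, a $K_4$ only rules out complete $n$-partite for $n\le 3$, so the paper's bipartition-based argument is the one that generalizes to its subsequent claim that $\Gamma_s(G)$ is not complete $n$-partite for arbitrary $n$.
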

\begin{proof}
Let $\Gamma_s(G)$ be complete bipartite.
% Therefore there exist two partite sets, say $A_1, A_2$. It is obvious that partite sets are non-empty. Let $u\in A_1$ and $v\in A_2$. By  \cite[Theorem 6.7]{gkps}
%there exist $s\in G\setminus Sol(G)$ such that $\langle u, s\rangle$ and $\langle v, s\rangle$ are not solvable. Therefor $s\not\in A_1$ and $s\not\in A_2$ which is a contradiction. Hence, the result follows.
Suppose that $A_1$ and $A_2$ are parts of the bi-partition.  Then, by Proposition \ref{star}, $|A_1|\ge 2$ and $|A_2|\ge 2$.
Let $u\in A_1, v \in A_2$.  If $|\langle u, v \rangle Sol(G)\setminus Sol(G)| > 2$, then there exists $y \in \langle u, v \rangle Sol(G)\setminus Sol(G)$ with $u \ne y \ne v$ such that $\langle u, y\rangle$ and $\langle v,y \rangle$ are both soluble.  But then $y \not \in A_1$ and $y \not \in A_2$, a contradiction.

It follows that $|\langle u, v \rangle Sol(G)\setminus Sol(G)| = 2$. In particular, $Sol(G)=1$ and $\langle u,v \rangle$ is cyclic of order $3$ or $|Sol(G)|=2$ and $v= uz$ for $z$ an involution in $Sol(G)$.
Now the neighbours of $u \in A_1$ is just $u^2\in A_2$ or $uz$ in the respective cases.  Hence $|A_2|= |A_1|=1$, a contradiction. Hence, the result follows.
\end{proof}
Following similar arguments as in the proof of Proposition \ref{comp} we get the following result.
\begin{prop}
$\Gamma_s(G)$ is not complete $n$-partite.
\end{prop}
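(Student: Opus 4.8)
The plan is to generalize the argument of Proposition~\ref{comp} directly, handling the case of a complete $n$-partite graph for arbitrary $n \geq 2$. Suppose for contradiction that $\Gamma_s(G)$ is complete $n$-partite with parts $A_1, A_2, \dots, A_n$. The key structural feature of a complete multipartite graph is that any two vertices in \emph{different} parts are adjacent, while any two vertices in the \emph{same} part are non-adjacent. So I first fix vertices $u \in A_i$ and $v \in A_j$ with $i \neq j$, so that $\langle u, v\rangle$ is solvable.

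The heart of the argument is the same coset-counting observation used in Proposition~\ref{comp}. I would examine the set $\langle u, v\rangle Sol(G) \setminus Sol(G)$. Every element $y$ of this set satisfies that both $\langle u, y\rangle$ and $\langle v, y\rangle$ are solvable (since $y$ lies in the solvable subgroup $\langle u,v\rangle Sol(G)$, using that $Sol(G)$ is the solvable radical and products with it preserve solvability). If this set had more than two elements, I could pick $y$ distinct from both $u$ and $v$; then $y$ would be adjacent to both $u \in A_i$ and $v \in A_j$, forcing $y \notin A_i$ and $y \notin A_j$. But $y$ being adjacent to some vertex in \emph{every} part other than its own is automatic in a complete multipartite graph, so the genuine contradiction I must extract is that $y$ is adjacent to both $u$ and $v$ \emph{which lie in two different parts}, and also to everything else---this forces the part containing $y$ to interact with $A_i$ and $A_j$ consistently, and the same rigidity as before collapses the structure. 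Concretely, I conclude $|\langle u,v\rangle Sol(G)\setminus Sol(G)| = 2$, yielding either $Sol(G)=1$ with $\langle u,v\rangle \cong \mathbb{Z}_3$, or $|Sol(G)|=2$ with $v = uz$ for an involution $z$, exactly as in Proposition~\ref{comp}.

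In either case the neighbourhood of any vertex $u$ consists of a single vertex (namely $u^2$ or $uz$ respectively), so $\deg(u) = 1$ for every $u$. A complete $n$-partite graph in which every vertex has degree $1$ must be a disjoint structure that is in fact a single edge or a perfect matching collapsing the parts; more precisely, every vertex adjacent to only one other vertex forces each part to be a singleton and $n = 2$, which returns us to the complete bipartite (indeed $K_2$) case already excluded by Propositions~\ref{star} and~\ref{comp}. This contradiction completes the proof.

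I expect the main obstacle to be articulating the multipartite rigidity cleanly: in the bipartite case ``$y \notin A_1$ and $y \notin A_2$'' was an immediate contradiction because there were only two parts, whereas for $n \geq 3$ a common neighbour $y$ of $u$ and $v$ can legitimately sit in a third part. The fix is to observe that the argument does not actually need $y$ to be excluded from \emph{all} parts; it needs only that the degree computation via Lemma~\ref{degree-lem} together with the coset bound forces every vertex to have degree $1$, and a complete $n$-partite graph with all degrees equal to $1$ is impossible for $n \geq 2$ once each part has size at least $2$ (guaranteed by Proposition~\ref{star}). So I would lead with establishing $|A_i| \geq 2$ for all $i$, then derive the degree-$1$ conclusion, and finally note the numerical incompatibility.
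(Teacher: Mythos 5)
The paper itself gives no details here---it merely asserts that the bipartite argument carries over---but your attempt to actually carry it over exposes the real difficulty, and your proposed fix does not close it. Everything in your argument downstream of the sentence ``Concretely, I conclude $|\langle u,v\rangle Sol(G)\setminus Sol(G)|=2$'' depends on that equality: it is what yields the two structural cases ($Sol(G)=1$ with $\langle u,v\rangle\cong{\mathbb{Z}}_3$, or $|Sol(G)|=2$ with $v=uz$) and hence the claim that every vertex has degree $1$. But in Proposition~\ref{comp} that equality was deduced precisely from the contradiction ``$y\notin A_1$ and $y\notin A_2$,'' which, as you yourself observe, evaporates for $n\ge 3$: a common neighbour $y$ of $u$ and $v$ sitting in a third part is entirely consistent with the graph being complete $n$-partite, so no contradiction arises from $|\langle u,v\rangle Sol(G)\setminus Sol(G)|>2$. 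Your proposed repair---that one only needs ``the coset bound together with Lemma~\ref{degree-lem}'' to force degree $1$---is circular, because the coset bound is exactly the step that is no longer justified. (Your final numerical observation is fine: a complete $n$-partite graph in which every part has size at least $2$ has no vertex of degree $1$. The problem is solely that degree $1$ is never established.)

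A correct generalization uses a different feature of complete multipartite graphs: non-adjacency (together with equality) is an equivalence relation, so if $a\not\sim b$ then any vertex adjacent to $a$ lies in a part different from that of $a$, hence different from that of $b$, hence is adjacent to $b$. Since $G$ is non-solvable there exist $a,b\in G\setminus Sol(G)$ with $\langle a,b\rangle$ non-solvable, i.e.\ $a\not\sim b$. If $o(a)>2$, then $a^{-1}\neq a$ is a vertex adjacent to $a$ (and $a^{-1}\neq b$, else $\langle a,b\rangle$ would be cyclic), so $a^{-1}\sim b$ and $\langle a,b\rangle=\langle a^{-1},b\rangle$ is solvable, a contradiction. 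If $z\in Sol(G)\setminus\{1\}$, then $az\neq a$ is adjacent to $a$, so $\langle az,b\rangle$ is solvable and hence so is $\langle a,b\rangle\le\langle az,b\rangle Sol(G)$ by Lemma~\ref{sol}, again a contradiction. Thus $Sol(G)=1$ and every element occurring in a non-solvable pair is an involution; since every non-identity element occurs in some non-solvable pair (otherwise it would lie in $Sol(G)=1$), the group $G$ would be an elementary abelian $2$-group, contradicting non-solvability.
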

%\begin{proof}
%Proceeding exactly the similar way of the proof of Proposition \ref{comp} we will get an element $s\in G\setminus Sol(G)$ such that $s\not\in A_i, i=1,2,\ldots n$ where $A_i$ are partite sets, which is a contradiction.
%\end{proof}

\begin{prop} For any finite non-solvable group $G$, $\Gamma_s(G)$ has no isolated vertex.
\end{prop}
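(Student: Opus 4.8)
The plan is to show that every vertex $u\in G\setminus Sol(G)$ has at least one neighbour by exhibiting, in each case, an explicit vertex $v\neq u$ with $\langle u,v\rangle$ solvable; by Lemma \ref{degree-lem} this is the same as checking $|Sol_G(u)|>|Sol(G)|+1$. Observe first that $u\neq 1$, since $1\in Sol(G)$. I would then split the argument according to whether $Sol(G)$ is trivial and, in the trivial case, according to the order of $u$.

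First suppose $Sol(G)\neq\{1\}$ and choose $s\in Sol(G)$ with $s\neq 1$. By the very definition of $Sol(G)$ the subgroup $\langle u,s\rangle$ is solvable, hence so is $\langle u,us\rangle=\langle u,s\rangle$. Since $Sol(G)$ is a subgroup and $u\notin Sol(G)$, the element $us$ lies in $G\setminus Sol(G)$ and $us\neq u$; therefore $us$ is a vertex adjacent to $u$. Next suppose $Sol(G)=\{1\}$. If $u$ has order at least $3$, then $u^2\neq 1$ and $u^2\neq u$, so $u^2$ is a vertex and $\langle u,u^2\rangle=\langle u\rangle$ is cyclic, hence solvable; thus $u\sim u^2$.

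The remaining and most delicate case is $Sol(G)=\{1\}$ with $u$ an involution, where neither trick above applies. Here I would use two elementary facts about involutions: (i) if $G$ had a unique involution it would be central, forcing $u\in Z(G)\subseteq Sol(G)=\{1\}$, which is absurd, so $G$ contains a second involution $v\neq u$; and (ii) any two involutions generate a dihedral group, which is solvable. Consequently $\langle u,v\rangle$ is solvable and $v$ is a vertex distinct from $u$, so $u\sim v$.

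The three cases are exhaustive, so $\Gamma_s(G)$ has no isolated vertex. The main obstacle is exactly the involution case: both the ``multiply by a radical element'' and the ``pass to a power'' constructions break down precisely when $Sol(G)=\{1\}$ and $u^2=1$, and it is there that one must invoke the dihedral-group observation together with the centrality of a hypothetical unique involution. I expect non-solvability of $G$ to enter only in guaranteeing that the vertex set $G\setminus Sol(G)$ is nonempty; the per-vertex argument itself is uniform, so it in fact shows that every vertex has a neighbour whenever $\Gamma_s(G)$ is defined.
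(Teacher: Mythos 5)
Your proof is correct and follows essentially the same route as the paper's: multiply by a nontrivial element of $Sol(G)$ when the radical is nontrivial, use $u^2$ when $o(u)\geq 3$, and fall back on the solvability of the dihedral group generated by two involutions in the remaining case. The only cosmetic difference is that in the involution case the paper looks at the conjugates $x^y$ directly and concludes $x\in Z(G)\leq Sol(G)$, whereas you extract a second involution from the same ``a unique involution is central'' fact; the two arguments are interchangeable.
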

\begin{proof}
Suppose $x$ is an isolated vertex of $\Gamma_s(G)$. Then $|Sol(G)|=1$; otherwise $x$ is adjacent to $xz$ for any $z\in Sol(G)\setminus \{1\}$. Thus it follows that $o(x)=2$; otherwise $x$ is adjacent to $x^2$.
% Now $x\in P$, for some Sylow 2-subgroup $P$ of $G$. Thus $|P|=2$; otherwise $x$ is adjacent to $y$, where $y\in P$, $y\neq 1,x$. It follows that $P$ is normal in $G$; otherwise if $P'$ is another Sylow 2-subgroup of $G$, then $\langle P,P'\rangle$ is a dihedral group and so solvable, and hence $x$ is adjacent to $y\in P'$, $y\neq 1$. Thus $P$ is normal in $G$. Let $y \in G\setminus P$. Then $yxy^{-1}\in P$, and so $yxy^{-1}=x$, that is $xy=yx$. Thus, we have $x\in Z(G)\subseteq Sol(G)$, a contradiction.
Let $y \in G$. Then $\langle x, x^y\rangle$ is dihedral and so $x= x^y$ as $x$ is isolated. Hence $x \in Z(G)$ and so $x \in Z(G)\le Sol(G)$, a contradiction.
 Hence, $\Gamma_s(G)$ has no isolated vertex.

\end{proof}

The following lemma is useful in proving the next two results as well as some results in  subsequent sections. 
\begin{lem} \label{lem-x-x^2}
Let $G$ be a finite non-solvable group. Then there exist $x\in G$ such that $x, x^2\not\in Sol(G)$.
\end{lem}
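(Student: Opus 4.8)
The plan is to argue by contradiction. Suppose that no such element exists; that is, for every $x \in G \setminus Sol(G)$ we have $x^2 \in Sol(G)$. Since $Sol(G)$ is a subgroup, the conclusion $x^2 \in Sol(G)$ holds trivially for every $x \in Sol(G)$ as well. Hence $x^2 \in Sol(G)$ for \emph{all} $x \in G$.

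The key step is to pass to the quotient $\overline{G} := G/Sol(G)$, which is legitimate because $Sol(G)$, being the solvable radical, is a normal subgroup of $G$. The condition $x^2 \in Sol(G)$ for all $x \in G$ translates to $\overline{x}^{\,2} = \overline{1}$ for every $\overline{x} \in \overline{G}$; that is, $\overline{G}$ has exponent dividing $2$. A standard computation (expanding $\overline{1} = (\overline{a}\,\overline{b})^2$ and using $\overline{a}^{\,2} = \overline{b}^{\,2} = \overline{1}$) shows that such a group is abelian, and in particular solvable.

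Finally I would combine solvability of $\overline{G} = G/Sol(G)$ with solvability of $Sol(G)$ to conclude that $G$ itself is solvable, contradicting the hypothesis that $G$ is non-solvable. Equivalently, since $G$ is non-solvable we have $\overline{G} \neq \overline{1}$, so the abelian group $\overline{G}$ is a non-trivial solvable normal subgroup of itself, contradicting the fact that $G/Sol(G)$ has trivial solvable radical. This contradiction forces the existence of some $x$ with $x, x^2 \notin Sol(G)$.

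There is no serious obstacle here; the only point that requires a moment's care is the reduction showing that the negated statement forces $\overline{G}$ to have exponent dividing $2$ on all of its elements — one must remember to treat the elements of $Sol(G)$ as well, not just those of $G \setminus Sol(G)$ — after which the fact that a group of exponent $2$ is abelian finishes the argument immediately.
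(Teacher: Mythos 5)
Your proposal is correct and follows essentially the same route as the paper: negate the statement, observe that $x^2 \in Sol(G)$ for all $x \in G$, deduce that $G/Sol(G)$ is an elementary abelian $2$-group and hence solvable, and combine with the solvability of $Sol(G)$ to contradict non-solvability of $G$. Your extra care in handling the elements of $Sol(G)$ when forming the negation is a point the paper glosses over, but the argument is identical in substance.
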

\begin{proof} 
Suppose that for all $x\in G$, we have $x^2\in Sol(G)$. Therefore, $G/Sol(G)$ is elementary abelian and hence solvable. Also, $Sol(G)$ is solvable. It follows that $G$ is solvable, a contradiction. Hence, the result follows.
% Now, since $Sol(G)$ is solvable, there exist a sub-normal series $1= S_1\leq \dots \leq S_k=Sol(G)$ such that $S_i$ is normal in $S_{i+1}$ and $S_{i+1}/S_i$ is abelian. Thus, it follows that,
%$1= S_1\leq \dots \leq S_k\leq G$
%is a sub-normal series of G. Therefore, $G$ is solvable, which is a contradiction. Hence, the result follows.
\end{proof}

\begin{thm} \label{no3cyc} 
Let $G$ be a finite non-solvable group. Then $girth(\Gamma_s (G))=3$.
\end{thm}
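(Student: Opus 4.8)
The plan is to show that $\Gamma_s(G)$ contains a triangle; since a simple graph cannot have a cycle of length less than $3$, this immediately gives ${\rm girth}(\Gamma_s(G)) = 3$. Thus it suffices to exhibit three pairwise adjacent vertices. The guiding observation is the following: if $H \leq G$ is any solvable subgroup, then any two distinct elements $u, v \in H \setminus Sol(G)$ are adjacent, because $\langle u, v \rangle \leq H$ is again solvable. Consequently, any solvable subgroup $H$ with $|H \setminus Sol(G)| \geq 3$ yields a triangle, and the whole problem reduces to producing such an $H$.

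First I would use Lemma \ref{lem-x-x^2} to pick $x \in G$ with $x, x^2 \notin Sol(G)$, and set $H = \langle x \rangle Sol(G)$. Since $Sol(G)$ is a normal solvable subgroup and $H/Sol(G)$ is cyclic, $H$ is solvable. Writing $d = |\langle x \rangle \cap Sol(G)|$, one has $|H \setminus Sol(G)| = |Sol(G)|\left(\frac{o(x)}{d} - 1\right)$, and the condition $x^2 \notin Sol(G)$ forces $o(x)/d \geq 3$, so that $|H \setminus Sol(G)| \geq 2|Sol(G)|$. Hence whenever $|Sol(G)| \geq 2$ we already have at least four non-radical elements inside the solvable group $H$; explicitly, $x$, $x^2$, and $xz$ for any $1 \neq z \in Sol(G)$ are three distinct such elements (distinctness being routine and $xz \notin Sol(G)$ since $x \notin Sol(G)$), and a triangle follows immediately.

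It remains to treat the case $Sol(G) = 1$, where the count above only guarantees $o(x) - 1$ usable elements and therefore breaks down precisely when every cyclic subgroup supplied by the Lemma has order $3$; this residual configuration is the main obstacle. To clear it I would pass to $\bar{G} = G/Sol(G)$, which is non-solvable (a solvable-by-solvable group is solvable) and has trivial solvable radical. By Burnside's $p^a q^b$ theorem, $|\bar{G}|$ has at least three distinct prime divisors, hence some prime divisor $p \geq 5$; by Cauchy's theorem $\bar{G}$ contains an element $\bar{y}$ of order $p$. Any preimage $y \in G$ then satisfies $y, y^2, y^3 \notin Sol(G)$, since their images $\bar{y}, \bar{y}^2, \bar{y}^3$ are distinct and nontrivial as $p \geq 5$; these three elements lie in the abelian subgroup $\langle y \rangle$, so they are pairwise adjacent and form a triangle. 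I would remark that this last argument in fact disposes of every case uniformly, the role of the Lemma being to give the quicker count of the second paragraph. The only subtlety worth double-checking throughout is that the three chosen vertices are genuinely distinct and all lie outside $Sol(G)$, which in each case comes down to the order/index bounds $o(x)/d \geq 3$ and $p \geq 5$.
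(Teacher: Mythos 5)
Your proof is correct and follows essentially the same route as the paper's: the triple $x$, $x^2$, $xz$ (obtained via Lemma \ref{lem-x-x^2}) when $|Sol(G)|\geq 2$, and Burnside's $p^aq^b$ theorem to force an element of order at least $5$ when $Sol(G)=1$. The only differences are presentational: you argue directly by exhibiting a triangle (invoking Cauchy to produce the order-$p$ element) where the paper argues by contradiction, and your observation that the quotient argument handles all cases uniformly is a mild streamlining rather than a new idea.
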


\begin{proof} Suppose for a contradiction that $\Gamma_s (G)$ has no 3-cycle. Let $x\in G$ such that $x,x^2\not\in Sol(G)$ (Lemma \ref {lem-x-x^2} guarantees the existence of such element). Suppose $|Sol(G)| \geq 2$. Let $z\in Sol(G), z\neq 1$, then $x,x^2$ and $xz$ form a 3-cycle, which is a contradiction. Thus $|Sol(G)|=1$. In this case, every element of $G$ has order 2 or 3; otherwise, $\{x, x^2, x^3 \}$ forms a 3-cycle in $\Gamma_s (G)$ for all $x\in G$ with $o(x) >3$.  Therefore,  $|G|= 2^m 3^n$ for some non-negative integers $m$ and $n$. 
%If $m\geq 2$, then the Sylow 2-subgroup $G_2$ of $G$ is elementary abelian, which implies that  the induced subgraph $\Gamma_S (G)[G_2 \setminus \{1\}] \cong K_{2^m -1}$ contains a 3-cycle.  Therefore, we have $m\leq 1$. Also, if $n\geq2$, then  the Sylow 3-subgroup $G_3$ of $G$ is solvable and $\Gamma_s (G)[G_3 \setminus \{1\}] \cong K_{3^n -1}$ contains a 3-cycle.  Therefore, we have $n\leq 1$. Thus,  $|G| \leq 6$, which is a contradiction. 
By Burnside's Theorem, it follows that $G$ is solvable; a contradiction.  Hence, $girth(\Gamma_s(G))=3$.
\end{proof}

\begin{thm} \label{clique-no}
Let $G$ be a finite non-solvable group. Then $\omega(\Gamma _s(G))\geq 4$.
\end{thm}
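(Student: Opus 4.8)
The plan is to exhibit an explicit clique of size four, and the guiding principle is the following observation: whenever $H$ is a solvable subgroup of $G$, any two distinct elements $u,v \in H\setminus Sol(G)$ are adjacent in $\Gamma_s(G)$, because $\langle u,v\rangle \le H$ is solvable. Hence it suffices to produce a single solvable subgroup $H$ with $Sol(G)\le H$ and $|H\setminus Sol(G)| = |H|-|Sol(G)| \ge 4$; the set $H\setminus Sol(G)$ is then a clique and the stated bound follows immediately.

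To build such an $H$ I would pass to the quotient $\bar{G} = G/Sol(G)$. Since $Sol(G)$ is solvable while $G$ is not, $\bar{G}$ is a non-trivial non-solvable group (with trivial solvable radical). By Burnside's $p^aq^b$-theorem the order of a non-solvable group cannot involve only two primes, so $|\bar{G}|$ is divisible by some prime $p \ge 5$. By Cauchy's theorem $\bar{G}$ has an element $\bar{x}$ of order $p$; I then choose a preimage $x \in G$ and set $H := \langle x\rangle Sol(G)$.

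Now $Sol(G)\trianglelefteq H$ and $H/Sol(G) = \langle \bar{x}\rangle$ is cyclic of order $p$, so $H$ is an extension of a solvable group by a solvable group and is therefore solvable. Moreover $|H\setminus Sol(G)| = (|H/Sol(G)|-1)|Sol(G)| = (p-1)|Sol(G)| \ge 4$ since $p \ge 5$. By the observation of the first paragraph, $H\setminus Sol(G)$ is a clique of size at least $4$, whence $\omega(\Gamma_s(G)) \ge 4$.

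I expect the only genuine content to be the reduction to a single solvable subgroup together with the input from Burnside's theorem that forces a prime $p\ge 5$; everything else is bookkeeping. The point to be careful about is that the chosen vertices really avoid $Sol(G)$ and are pairwise distinct, which is exactly why I prefer to argue through the coset count $|H|-|Sol(G)|$ rather than writing elements out by hand. If one instead wishes to split on $|Sol(G)|$, then for $|Sol(G)|\ge 2$ one can take $\{x,x^2,xz,x^2z\}$ with $z\in Sol(G)\setminus\{1\}$ and $x$ supplied by Lemma \ref{lem-x-x^2}, while for $Sol(G)=1$ one takes an element $x$ of order $p\ge 5$ and the clique $\{x,x^2,x^3,x^4\}\subseteq\langle x\rangle$; the unified argument above subsumes both.
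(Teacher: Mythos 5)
Your proof is correct, and while it runs on the same engine as the paper's --- Burnside's $p^aq^b$ theorem is what ultimately forces a prime $p\ge 5$ into the picture --- the organization is genuinely different. The paper argues by contradiction with a case split on $|Sol(G)|$: for $|Sol(G)|\ge 2$ it exhibits the explicit clique $\{x,x^2,xz,x^2z\}$ using the element supplied by Lemma \ref{lem-x-x^2}, and for $|Sol(G)|=1$ it deduces that all element orders lie in $\{2,3,4\}$, hence $|G|=2^m3^n$, contradicting non-solvability via Burnside. You instead apply Burnside directly to the non-solvable quotient $G/Sol(G)$ to obtain a prime $p\ge 5$ dividing its order, lift an element of order $p$ via Cauchy, and observe that $H=\langle x\rangle Sol(G)$ is a solvable subgroup (by the same normality-and-extension argument the paper records later as Lemma \ref{sol}) whose complement $H\setminus Sol(G)$ is a clique of size $(p-1)|Sol(G)|\ge 4$. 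This buys you a direct, case-free construction that bypasses Lemma \ref{lem-x-x^2} entirely and in fact yields the slightly stronger bound $\omega(\Gamma_s(G))\ge (p-1)|Sol(G)|\ge 4$; the paper's version has the advantage of running in parallel with its girth argument (Theorem \ref{no3cyc}) and reusing the same preparatory lemma. Your closing remark correctly identifies the paper's case split as a specialization of your coset count.
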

\begin{proof}
Suppose for a contradiction that $G$ is a finite non-solvable group with $\omega(\Gamma _s(G))\leq 3$. Let $x\in G\setminus Sol(G)$ such that $x^2\not \in G$. %Such element exists by Lemma \ref{lem-x-x^2}. 
Existence of such element is guaranteed by Lemma \ref{lem-x-x^2}. 
Suppose $|Sol(G)|\geq 2$. Let $z\in Sol(G), z\neq 1$, then $\{x,x^2,xz,x^2z\}$ is a clique which is a contradiction. Thus $|Sol(G)|=1$. In this case every element of $G\setminus Sol(G)$ has order $2,3$ or $4$ otherwise $\{x,x^2,x^3,x^4\}$ is a clique with $o(x)>4$, which is a contradiction. Therefore $|G|=2^m3^n$ where $m,n$ are non-negative integers.
% Then Sylow 2-subgroup $G_2$ of $G$ is solvable. Therefore $\Gamma_s(G)[G_2\setminus\{1\}]\cong K_{2^m-1}$. Therefore $m\leq 2$, otherwise there exists a clique whose order is greater or equal to $7$ which is a contradiction. Similarly $n\leq 1$, otherwise there exists a clique whose order is greater or equal to $8$. Thus $|G|\leq 12$ and hence $G$ is solvable, which is a contradiction. This completes the proof.
Again, by Burnside's Theorem, it follows that $G$ is solvable; a contradiction.  This completes the proof.
\end{proof}
As a corollary to Theorem \ref{no3cyc} and Theorem \ref{clique-no}   we have the following corollary.
% in the solvable graph of a finite non-solvable group. So we can prove the following corollary. 

\begin{cor}
The solvable graph of a finite non-solvable group is not a tree.
\end{cor}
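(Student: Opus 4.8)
The plan is to deduce this as an immediate consequence of the two preceding theorems. Recall that a tree is, by definition, a connected acyclic graph, so in particular a tree contains no cycle of any length. Theorem \ref{no3cyc} establishes that ${\rm girth}(\Gamma_s(G)) = 3$, which means $\Gamma_s(G)$ contains a $3$-cycle. The cleanest route is therefore to observe that the mere existence of a cycle contradicts acyclicity.

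Concretely, I would argue by contradiction. Suppose $\Gamma_s(G)$ were a tree. A tree has no cycles whatsoever, hence no triangle. But by Theorem \ref{no3cyc}, $girth(\Gamma_s(G)) = 3$, so $\Gamma_s(G)$ does contain a $3$-cycle. This is the desired contradiction, so $\Gamma_s(G)$ is not a tree. I note that Theorem \ref{clique-no} is not strictly needed for this particular conclusion, since the girth result alone already forbids a tree; the statement merely says the corollary follows from the two theorems, and indeed either one that produces a cycle suffices. (Theorem \ref{clique-no}, giving $\omega(\Gamma_s(G)) \geq 4$, also independently implies the presence of a $K_4$ and hence of cycles, so it furnishes an alternative one-line proof.)

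The argument is essentially trivial once the girth is known, so there is no genuine obstacle here. The only thing to be careful about is making sure the definition of "tree" being used is the standard one (connected and acyclic); under that convention the presence of even a single cycle is enough. If one wished to be maximally self-contained, one could phrase it as: any graph containing a cycle fails to be a tree, and $\Gamma_s(G)$ contains a triangle by Theorem \ref{no3cyc}, hence $\Gamma_s(G)$ is not a tree. This is the whole proof.
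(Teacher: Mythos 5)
Your argument is correct and matches the paper's intent exactly: the paper states this as an immediate corollary of Theorems \ref{no3cyc} and \ref{clique-no} without further detail, and the girth-$3$ result alone indeed yields a cycle, hence no tree. Your observation that Theorem \ref{clique-no} is not strictly needed is also accurate.
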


We conclude this section with the following result.
\begin{prop}
$\Gamma_s(G)$ is not regular.
\end{prop}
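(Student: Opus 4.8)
The plan is to translate regularity into a divisibility statement about centralizer orders and then contradict it with a Sylow argument. By Lemma~\ref{degree-lem}, $\Gamma_s(G)$ is regular if and only if $|Sol_G(u)|$ takes a constant value, say $k$, as $u$ ranges over $G\setminus Sol(G)$. Since no vertex $u$ lies in $Sol(G)$, there is some $v$ with $\langle u,v\rangle$ non-solvable, so $Sol_G(u)\neq G$ and hence $k\le |G|-1<|G|$. I would record this strict bound at the outset, as it is where the final contradiction lands.

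First I would reduce to the case $Sol(G)=1$. Writing $\bar G=G/Sol(G)$ and $\pi\colon G\to\bar G$ for the quotient map, one checks that $\langle u,v\rangle$ is solvable if and only if $\langle \bar u,\bar v\rangle$ is solvable in $\bar G$ (using that $Sol(G)$ and the image are both solvable, so an extension of solvable by solvable is solvable). Consequently $Sol_G(u)=\pi^{-1}(Sol_{\bar G}(\bar u))$ is a union of $Sol(G)$-cosets, whence $|Sol_G(u)|=|Sol(G)|\cdot|Sol_{\bar G}(\bar u)|$. Thus regularity of $\Gamma_s(G)$ forces $|Sol_{\bar G}(\bar u)|$ to be constant over $\bar u\neq 1$, i.e. $\Gamma_s(\bar G)$ is regular; since $\bar G$ is non-solvable with $Sol(\bar G)=1$, it suffices to derive a contradiction there. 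Hence assume $Sol(G)=1$, so every $u\neq 1$ is a vertex and $|Sol_G(u)|=k<|G|$ for all such $u$.

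The engine is the cited fact \cite[Proposition~2.13]{hr} that $|C_G(u)|$ divides $|Sol_G(u)|=k$ for every vertex $u$. Fix a prime $p$ dividing $|G|$, let $P$ be a Sylow $p$-subgroup, and pick $1\neq u\in Z(P)$; because $Sol(G)=1$ this $u$ is a vertex, and $P\le C_G(u)$ gives $|G|_p=|P|\mid |C_G(u)|\mid k$. Running this over all primes $p\mid |G|$ and using that the $p$-parts $|G|_p$ are pairwise coprime yields $|G|=\prod_p |G|_p \mid k$. This contradicts $0<k<|G|$, completing the proof.

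The main obstacle — and the one genuinely new idea — is the Sylow divisibility step: turning the single number $k$ into a multiple of every $|G|_p$, and hence of $|G|$, via central elements of Sylow subgroups together with \cite[Proposition~2.13]{hr}. The reduction to $Sol(G)=1$ is what makes this clean, since for a prime $p$ dividing $|Sol(G)|$ a central Sylow element need not be a vertex, so without passing to $\bar G$ one could only conclude $|G|_p\mid k$ for $p\nmid|Sol(G)|$. I expect no difficulty in the remaining verifications (the coset identity and the extension-closure of solvability), which are routine.
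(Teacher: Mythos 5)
Your proof is correct, but it takes a genuinely different route from the paper. The paper disposes of this proposition in one line: it cites \cite[Corollary 3.17]{hr} (that the non-solvable graph is never regular) and observes that a graph is regular if and only if its complement is. You instead give a self-contained argument: Lemma~\ref{degree-lem} converts regularity into the constancy of $|Sol_G(u)|=k<|G|$ on vertices; the coset identity $|Sol_G(u)|=|Sol(G)|\cdot|Sol_{G/Sol(G)}(\bar u)|$ (which is really Lemma~\ref{sol} plus the quotient direction of solvability) reduces to the case $Sol(G)=1$; and then the divisibility fact $|C_G(u)|\mid|Sol_G(u)|$ from \cite[Proposition 2.13]{hr} — already quoted in the paper's introduction — applied to a nontrivial central element of each Sylow $p$-subgroup forces $|G|_p\mid k$ for every prime $p$, hence $|G|\mid k$, contradicting $0<k<|G|$. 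All steps check out: the extension-closure argument for the coset identity is sound, $Sol(G/Sol(G))=1$ holds since $Sol(G)$ is the solvable radical, and $Z(P)\neq 1$ guarantees the needed vertex once $Sol(G)=1$. What your approach buys is independence from the unpublished reference for this particular corollary, at the cost of a longer argument; the paper's approach buys brevity but leans entirely on \cite{hr}, so your version is arguably the more informative one to have on record.
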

\begin{proof}
Follows from \cite[Corollary 3.17]{hr}, noting the fact that a graph is regular if and only if its complement is regular.
\end{proof}

\section{Genus  and diameter }
We begin this section with the following useful lemma.
\begin{lem} \label{sol} 
Let $G$ be a finite group and $H$ a solvable subgroup of $G$. Then $\langle H,Sol(G)\rangle$ is a solvable subgroup of $G$.
\end{lem}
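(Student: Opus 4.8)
The plan is to show that $\langle H, Sol(G)\rangle$ is solvable by exhibiting it as a normal product of two solvable subgroups. The key structural fact I would exploit is that $Sol(G)$ is the solvable radical of $G$, hence a \emph{normal} subgroup of $G$ (this is precisely its characterization as the unique largest solvable normal subgroup, as noted in the introduction via \cite{gkps}). Normality is the crucial leverage: it lets me write the subgroup generated by $H$ and $Sol(G)$ as a genuine set-theoretic product rather than an uncontrolled chain of alternating words.

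The main steps I would carry out are as follows. First, observe that since $Sol(G)\trianglelefteq G$, the product set $H\,Sol(G)=\{hs : h\in H,\ s\in Sol(G)\}$ is already a subgroup of $G$; consequently $\langle H, Sol(G)\rangle = H\,Sol(G)$. This reduces the problem to showing that the product of a solvable subgroup with a normal solvable subgroup is solvable. Second, I would invoke the second isomorphism theorem: with $N := Sol(G)$ normal in $G$ (hence normal in $HN$) and $H\le G$, we have
\begin{equation*}
(HN)/N \cong H/(H\cap N).
\end{equation*}
Third, I would assemble solvability from the two pieces. The normal subgroup $N=Sol(G)$ is solvable by its very definition as the solvable radical. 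The quotient $(HN)/N$ is isomorphic to $H/(H\cap N)$, which is a quotient of the solvable group $H$ and therefore solvable. Since both $N$ and $(HN)/N$ are solvable, the standard closure property of solvability under extensions (a group with a solvable normal subgroup and solvable quotient is solvable) yields that $HN$ is solvable.

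The only genuine obstacle is confirming that $Sol(G)$ is indeed normal in $G$, since the proof rests entirely on this; but this is immediate from the fact that $Sol(G)$ is the solvable radical, which is characteristic (hence normal) in $G$. One might worry that $Sol_G(u)$ fails to be a subgroup in general, as emphasized in the introduction, yet this concern does not touch us here: we are working with the \emph{intersection} $Sol(G)=\bigcap_{u\in G} Sol_G(u)$, which coincides with the solvable radical and is a bona fide normal subgroup. Everything else is routine application of the isomorphism theorem and the extension-closure of solvable groups, so once normality is in hand the argument is short and clean.
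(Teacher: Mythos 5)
Your proposal is correct and follows exactly the same route as the paper: normality of $Sol(G)$ gives $\langle H, Sol(G)\rangle = H\,Sol(G)$, the second isomorphism theorem identifies $(H\,Sol(G))/Sol(G)$ with $H/(H\cap Sol(G))$, and closure of solvability under extensions finishes the argument. No differences worth noting.
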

\begin{proof}
Since $Sol(G)$ is normal we have $\langle H,Sol(G)\rangle = HSol(G)$. Now the proof follows from the fact that solvability is inherent by extension and quotient as
\[
\frac{HSol(G)}{Sol(G)} \cong \frac{H}{H \cap Sol(G)}.
\]
% Let $x,y \in \langle H,Sol(G)\rangle$. Then $x=h_1s_1\dots h_ks_k$ and $y=u_1v_1\dots u_lv_l$, where $h_i,u_i\in H$ and $s_j,v_j\in Sol(G)$. Let $M=\langle h_1,\dots,h_k,u_1\dots,u_l\rangle$. Then $M$ is solvable and so by \cite[Lemma 2.6]{hr}, $M_1=\langle M,s_1\rangle$ is solvable. Similarly, $M_2=\langle M_1,s_2\rangle$ is solvable. In this way we get that $M_{k+l}=\langle M_{k+l-1},s_{k+l}\rangle$ is solvable where $s_{k+i}=v_i$. Since $\langle x,y\rangle\leq M_{k+l}$,  we have $\langle x,y\rangle$ is solvable and hence $\langle H,Sol(G)\rangle$ is solvable.
\end{proof}

\begin{prop}\label{bound} 
Let $G$ be a finite non-solvable group such that $\gamma(\Gamma_s (G)) = m$.
% whose solvable graph has genus $g$, where $g$ is a non-negative integer.  Then the following assertions hold:  
\begin{enumerate}
\item   If $S$ is a nonempty subset of $G \setminus Sol(G)$  
%$\emptyset \neq S \subseteq G \setminus Sol(G)$ 
such that $\langle x,y \rangle$ is solvable for all $x,y \in S$, then $|S|\leq \left\lfloor\frac{7+\sqrt{1+48m}}{2}\right\rfloor$. 
\item   $|Sol(G)|\leq \frac{1}{t-1}\left\lfloor\frac{7+\sqrt{1+48m}}{2}\right\rfloor$, where $t = \max \{o(xSol(G))\mid xSol(G) \in G/Sol(G)\}$.  
\item   If $H$ is a solvable subgroup of $G$, then $|H| \leq \left\lfloor\frac{7+\sqrt{1+48m}}{2}\right\rfloor +|H \cap Sol(G)|$.
\end{enumerate}
 \end{prop}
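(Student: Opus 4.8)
The plan is to prove part (a) first and then obtain (b) and (c) as consequences by exhibiting suitable sets $S$ to which (a) applies. For (a), the key observation is that a set $S \subseteq G \setminus Sol(G)$ in which every pair of elements generates a solvable group is precisely a set of pairwise-adjacent vertices of $\Gamma_s(G)$; that is, the induced subgraph $\Gamma_s(G)[S]$ is the complete graph $K_{|S|}$. Since the genus of a graph is at least that of any of its subgraphs, we get $\gamma(K_{|S|}) \le \gamma(\Gamma_s(G)) = m$. Writing $n = |S|$ and invoking \eqref{kn}, this reads $\lceil (n-3)(n-4)/12 \rceil \le m$, whence $(n-3)(n-4) \le 12m$.

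The rest of (a) is the routine solution of the quadratic inequality $n^2 - 7n + (12 - 12m) \le 0$. Its discriminant is $1+48m$, so the positive root is $\frac{7 + \sqrt{1+48m}}{2}$, giving $n \le \frac{7+\sqrt{1+48m}}{2}$; since $n$ is an integer, $n \le \left\lfloor \frac{7+\sqrt{1+48m}}{2} \right\rfloor$. For $n \le 2$ the estimate holds trivially, as the right-hand side is at least $4$ whenever $m \ge 0$.

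For (b), I would pick $x \in G$ with $o(xSol(G)) = t$ and set $H = \langle x \rangle Sol(G)$. By Lemma \ref{sol}, $H$ is solvable, so $S := H \setminus Sol(G)$ satisfies the hypothesis of (a). Since $H/Sol(G) = \langle xSol(G)\rangle$ has order $t$, we have $|H| = t\,|Sol(G)|$ and hence $|S| = (t-1)|Sol(G)|$, noting $t \ge 2$ because $G/Sol(G)$ is non-trivial. Applying (a) yields $(t-1)|Sol(G)| \le \left\lfloor \frac{7+\sqrt{1+48m}}{2}\right\rfloor$, which rearranges to the claim. For (c), given a solvable subgroup $H$, the set $S := H \setminus Sol(G) = H \setminus (H \cap Sol(G))$ again satisfies the hypothesis of (a) since $H$ is solvable, so $|H| - |H \cap Sol(G)| = |S| \le \left\lfloor \frac{7+\sqrt{1+48m}}{2}\right\rfloor$, giving the stated inequality (which is trivial when $H \subseteq Sol(G)$ and $S=\emptyset$).

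I expect no serious difficulty: the entire content is concentrated in the single idea that a mutually solvably-generating set induces a clique, and the rest is bookkeeping supported by Lemma \ref{sol}. The one step deserving care is the index computation in (b)—verifying that $H/Sol(G) = \langle xSol(G)\rangle$ has order exactly $t$, so that precisely $(t-1)|Sol(G)|$ elements of $H$ lie outside $Sol(G)$—together with confirming $t \ge 2$ so that the resulting bound is meaningful.
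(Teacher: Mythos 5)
Your proof is correct and follows the same overall strategy as the paper: in all three parts the work reduces to observing that $S$ induces a complete subgraph $K_{|S|}$ of $\Gamma_s(G)$, so $\gamma(K_{|S|})\leq m$, and parts (b) and (c) follow by feeding the sets $\langle x\rangle Sol(G)\setminus Sol(G)$ and $H\setminus Sol(G)$ (both legitimate by Lemma \ref{sol}) into part (a) — the paper's choice $S=\bigsqcup_{i=1}^{t-1}y^iSol(G)$ in (b) is exactly your $H\setminus Sol(G)$. The one place you genuinely diverge is in extracting the numerical bound in (a): the paper splits into cases, handling $m=0$ by a $K_5$ argument and $m>0$ via Heawood's formula through the chain $|S|=\omega(\Gamma_s(G)[S])\leq\omega(\Gamma_s(G))\leq\chi(\Gamma_s(G))\leq\left\lfloor\frac{7+\sqrt{1+48m}}{2}\right\rfloor$, whereas you apply the exact genus formula \eqref{kn} to get $(|S|-3)(|S|-4)\leq 12m$ and solve the quadratic. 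Your route is slightly more direct: it avoids invoking the chromatic number, and it treats $m=0$ uniformly (the quadratic gives $|S|\leq 4$ there), which is a small gain since Heawood's bound is usually stated only for positive genus. Both arguments are sound; you were also right to flag the index computation in (b), and your verification that $|H|=t\,|Sol(G)|$ and $t\geq 2$ is exactly what makes the rearrangement legitimate.
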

\begin{proof}
We have $\Gamma_s (G)[S] \cong K_{|S|}$ and $\gamma(K_{|S|}) = \gamma(\Gamma_s (G)[S])\leq \gamma(\Gamma_s (G))$. Therefore, if $m = 0$ then $\gamma(K_{|S|}) = 0$. This gives $|S|\leq 4$, otherwise $K_{|S|}$ will have a subgraph $K_5$ having genus $1$. 
%If $g=0$, then $\gamma(K_{|S|}) = \gamma(\Gamma_s (G)[S])\leq \gamma(\Gamma_s (G))=0$, and so, it follows that $|S|\leq 4$.
If $m > 0$ then, by Heawood's formula \cite[Theorem 6.3.25]{west}, we have 
$$
|S| = \omega(\Gamma_s (G)[S]) \leq \omega(\Gamma_s (G)) \leq \chi(\Gamma_s (G)) \leq \left\lfloor \frac{7+\sqrt{1+48m}}{2}  \right\rfloor
$$
where $\chi(\Gamma_s (G))$ is the chromatic number of $\Gamma_s (G)$.  Hence part (a) follows. 

Part (b) follows from Lemma \ref{sol} and part (a) considering  $S= \overset{t-1}{\underset{i=1}\bigsqcup}   y^i Sol(G)$, where $y \in G \setminus Sol(G)$ such that $o(ySol(G))=t$. 

Part (c) follows from part (a)   noting that $H = (H\setminus Sol(G))\cup(H\cap Sol(G))$.
\end{proof}

%Our third result of this section says that every collection of finite groups which is not weakly nilpotent, whose nilpotent graphs have the same genus is finite.

\begin{thm}\label{fingen}
Let $G$ be a finite non-solvable group. Then  $|G|$ is bounded above by a function of  $\gamma(\Gamma_s (G))$.
% Consequently, given a non-negative integer $g$,  there are at the most finitely many finite non-solvable groups whose solvable graphs  have genus $g$.
\end{thm}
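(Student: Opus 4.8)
The plan is to bound the order of every solvable subgroup of $G$ in terms of $m := \gamma(\Gamma_s(G))$, and then to convert this into a bound on $|G|$ itself by simultaneously controlling the prime divisors of $|G|$ and their multiplicities through Sylow theory. Throughout I would abbreviate $b(m) := \left\lfloor\frac{7+\sqrt{1+48m}}{2}\right\rfloor$, the quantity appearing in Proposition \ref{bound}.

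First I would show that every solvable subgroup of $G$ has order at most $2b(m)$. Since $G$ is non-solvable, the quotient $G/Sol(G)$ is nontrivial, so $t := \max\{o(xSol(G)) : xSol(G)\in G/Sol(G)\} \geq 2$, and Proposition \ref{bound}(b) gives $|Sol(G)| \leq \frac{1}{t-1}b(m) \leq b(m)$. Now if $H$ is any solvable subgroup of $G$, then Proposition \ref{bound}(c) yields
\[
|H| \leq b(m) + |H\cap Sol(G)| \leq b(m) + |Sol(G)| \leq 2b(m).
\]

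Next I would use Sylow theory to pass from this uniform bound to a bound on $|G|$. For each prime $p$ dividing $|G|$, a Sylow $p$-subgroup $P$ is a $p$-group, hence solvable, so by the previous step $|P| \leq 2b(m)$; in particular $p \leq |P| \leq 2b(m)$. Thus every prime divisor of $|G|$ is at most $2b(m)$, and so the number of distinct prime divisors of $|G|$ is at most $2b(m)$. Moreover the full $p$-part $p^{a_p}$ of $|G|$ equals $|P| \leq 2b(m)$. Multiplying over the (at most $2b(m)$) prime divisors therefore gives
\[
|G| = \prod_{p\,\mid\,|G|} p^{a_p} \leq (2b(m))^{2b(m)},
\]
which is a function of $m = \gamma(\Gamma_s(G))$ alone, as required.

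The two applications of Proposition \ref{bound} and the Sylow counting are routine; the one point that deserves care is that one must bound both the size and the number of Sylow subgroups at once. The key observation making this painless is that a bound on the order of solvable subgroups already bounds the largest prime divisor of $|G|$ via the order of a single Sylow subgroup, so the number of prime divisors stays finite automatically and no appeal to the classification of finite simple groups is needed.
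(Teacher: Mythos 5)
Your proposal is correct and follows essentially the same route as the paper: bound $|Sol(G)|$ by $b(m)$, bound each Sylow subgroup by $2b(m)$ via Proposition \ref{bound}(c), and multiply over the boundedly many prime divisors. The only cosmetic differences are that the paper derives $|Sol(G)|\leq b(m)$ from Proposition \ref{bound}(a) applied to the clique $xSol(G)$ (via Lemma \ref{sol}) rather than from part (b), and it counts at most $h_m$ primes below $2h_m$ to get the slightly sharper bound $(2h_m)^{h_m}$, neither of which affects the validity of your argument.
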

\begin{proof} 
Let $\gamma(\Gamma_s (G)) = m$ and  $h_m = \left\lfloor\frac{7+\sqrt{1+48m}}{2}\right\rfloor$. By Lemma \ref{sol}, we have $\Gamma_s(G)[xSol(G)] \cong K_{|Sol(G)|}$, where $x \in G \setminus Sol(G)$. Therefore by Proposition \ref{bound}(a), $|Sol(G)| \leq h_m$. 

Let $P$ be a Sylow $p$-subgroup of $G$ for any prime $p$ dividing $|G|$ having order $p^n$ for some positive integer $n$. Then $P$ is a solvable. Therefore, by Proposition \ref{bound}(c), we have $|P| \leq h_m + |Sol(G)| \leq 2h_m$. Hence, $|G|< (2h_m)^{h_m}$ noting that the number of primes less than $2h_m$ is at most $h_m$. 
% Let $p$ be a prime divisor of $|G|$, and $P$ be a Sylow $p$-subgroup of $G$ with $|P|=p^n$, where $n$ is a positive integer.  Then $P$ is a solvable group.  So, by Proposition \ref{bound}(c), we have $|P| \leq 2h$. Since the number of primes less than $2h$ is at most $ h$, we have $|G|< (2h)^h$. 
This completes the proof.
\end{proof}
As an immediate consequence of Theorem \ref{fingen} we have the following corollary.
\begin{cor}
Let $n$ be a non-negative integer. Then there are at the most finitely many finite non-solvable groups $G$  such that $\gamma(\Gamma_s (G)) = n$.
\end{cor}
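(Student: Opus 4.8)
The plan is to derive this corollary as an immediate logical consequence of Theorem \ref{fingen}, which already does all the substantive work. The key observation is that Theorem \ref{fingen} provides an explicit upper bound on $|G|$ purely in terms of $\gamma(\Gamma_s(G))$; here that bound is $(2h_n)^{h_n}$ where $h_n = \left\lfloor\frac{7+\sqrt{1+48n}}{2}\right\rfloor$. So once we fix the value $\gamma(\Gamma_s(G)) = n$, every finite non-solvable group $G$ with this genus has order strictly less than the fixed finite number $(2h_n)^{h_n}$.

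The main step is then to invoke the elementary fact that there are only finitely many finite groups (up to isomorphism) of any bounded order: for each integer $N$, the number of groups of order at most $N$ is finite, since a group of order $k$ is determined by a multiplication table on $k$ symbols, of which there are at most $k^{k^2}$. Combining this with the bound from Theorem \ref{fingen}, the groups $G$ satisfying $\gamma(\Gamma_s(G)) = n$ all lie within the finite collection of groups of order less than $(2h_n)^{h_n}$, and hence there are only finitely many of them up to isomorphism. This yields the corollary directly.

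I anticipate no genuine obstacle here, since the hard analytic content has been absorbed into Theorem \ref{fingen}. The only point requiring a line of care is the standard reduction from ``bounded order'' to ``finitely many groups,'' but this is classical and can be stated in a single sentence. One should also note the degenerate cases $n = 0, 1, 2, 3$ are consistent with the later nonexistence results (there are in fact zero such groups), but the corollary as stated---asserting finiteness, which vacuously includes the empty case---requires nothing beyond the order bound. Thus the proof is essentially a one-line deduction: by Theorem \ref{fingen} the order of $G$ is bounded by a function of $n$, and only finitely many finite groups have order below any fixed bound.
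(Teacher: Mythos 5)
Your proposal is correct and matches the paper exactly: the paper presents this corollary as an immediate consequence of Theorem \ref{fingen} with no further argument, and your deduction---bounding $|G|$ by $(2h_n)^{h_n}$ and then invoking the finiteness of groups of bounded order---is precisely the intended reasoning.
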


The following two lemmas are essential in proving the main results of this section.
\begin{lem}\cite[Lemma 3.4]{nong}\label{group7}
 Let $G$ be a finite group. 
\begin{enumerate}
\item If $|G|=7m$ and  the Sylow $7$-subgroup is normal in $G$, then  $G$ has an abelian subgroup of order at least $14$ or $|G|\leq 42$.

\item If $|G|=9m$, where $3\nmid m$ and  the Sylow $3$-subgroup is normal in $G$, then  $G$ has an abelian subgroup of order at least $18$ or $|G|\leq 72$.
\end{enumerate}
\end{lem}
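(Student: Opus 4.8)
The plan is to study the normal Sylow subgroup $P$ through its centralizer $C_G(P)$, using the $N/C$-theorem embedding $G/C_G(P)\hookrightarrow \mathrm{Aut}(P)$, and to split the argument according to whether $C_G(P)$ strictly contains $P$ or equals $P$. The guiding principle is that extra centralizing elements instantly build a large abelian subgroup via a direct product with $P$, while the case $C_G(P)=P$ forces $G/P$ to embed in $\mathrm{Aut}(P)$ and therefore caps $|G|$.

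For part (a), let $P$ be the normal Sylow $7$-subgroup, of order $7^a$. If $a\ge 2$ I would invoke the standard fact that a $p$-group of order at least $p^2$ contains a subgroup of order $p^2$, which is automatically abelian; such a subgroup has order $49\ge 14$ and we are done. So assume $|P|=7$, whence $7\nmid m$ and $P\le C_G(P)$ with the $7$-part of $C_G(P)$ equal to $7$. If $|C_G(P)|>7$, then Cauchy's theorem yields an element $y$ of some prime order $q\ne 7$ centralizing $P$, so $\langle P,y\rangle=P\times\langle y\rangle$ is abelian of order $7q\ge 14$. Otherwise $C_G(P)=P$, so $G/P\hookrightarrow\mathrm{Aut}(\mathbb{Z}_7)\cong\mathbb{Z}_6$ and $|G|=7\,|G/P|\le 42$.

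Part (b) follows the same skeleton. Here $3\nmid m$ forces $|P|=9$, so $P$ is abelian, either $\mathbb{Z}_9$ or $\mathbb{Z}_3\times\mathbb{Z}_3$, and again $P\le C_G(P)$ with $3$-part exactly $9$. If $|C_G(P)|>9$, a prime-order element $y\ne 3$ in $C_G(P)$ gives an abelian subgroup $P\times\langle y\rangle$ of order $9q\ge 18$. If $C_G(P)=P$ and $P\cong\mathbb{Z}_9$, then $G/P\hookrightarrow\mathrm{Aut}(\mathbb{Z}_9)\cong\mathbb{Z}_6$, and since $3\nmid|G/P|$ we get $|G/P|\le 2$, hence $|G|\le 18$.

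The remaining case, which I expect to be the main obstacle, is $C_G(P)=P$ with $P\cong\mathbb{Z}_3\times\mathbb{Z}_3$. Then $G/P$ embeds faithfully in $\mathrm{Aut}(P)\cong GL_2(\mathbb{F}_3)$, a group of order $48$; since $3\nmid|G/P|$, its order divides the $3'$-part $16$, which a priori yields only $|G|\le 144$. Sharpening this to the claimed bound requires a careful analysis of the $3'$-subgroups of $GL_2(\mathbb{F}_3)$ — in particular the largest ones, of order $16$, namely the semidihedral Sylow $2$-subgroup $SD_{16}$ — and showing that such configurations either cannot occur under the hypotheses or already supply a suitable abelian subgroup. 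The subtlety is that any abelian subgroup of order at least $18$ whose order is divisible by $9$ must contain the unique Sylow $3$-subgroup $P$ and hence lie in $C_G(P)=P$, which is impossible; so one is forced to hunt instead for abelian subgroups of the form $\langle v\rangle\times B$ with $v\in P$ of order $3$ and $B$ a $2$-subgroup centralizing $v$. Pinning down exactly which order‑$16$ actions admit such a $B$ (equivalently, controlling the fixed spaces of the $2$-elements of $SD_{16}$ on $\mathbb{F}_3^2$) is where the bulk of the work lies and where the precise numerical constant in the statement is decided.
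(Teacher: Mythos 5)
First, a point of comparison: the paper itself offers no proof of this lemma --- it is quoted from the unpublished reference \cite{nong} (as Lemma 3.4 there), so there is no internal argument to measure yours against. On its own terms, your treatment of part (a) and of the cyclic case of part (b) is complete and correct: the dichotomy $C_G(P)>P$ versus $C_G(P)=P$, Cauchy's theorem producing the abelian subgroup $P\times\langle y\rangle$, and the $N/C$-embedding of $G/C_G(P)$ into $\mathrm{Aut}(P)$ do exactly what you claim.

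The case you flag as ``the main obstacle'' is, however, not just the hard part of the proof --- it is a genuine gap that cannot be closed, because the statement as quoted fails there. Take $G=\mathbb{F}_3^{\,2}\rtimes S$ with $S$ a Sylow $2$-subgroup of $GL_2(\mathbb{F}_3)$ (semidihedral of order $16$) acting naturally; this is $A\Gamma L(1,9)$, of order $144=9\cdot 16$ with $3\nmid 16$ and normal Sylow $3$-subgroup $P=\mathbb{F}_3^{\,2}$. Here $C_G(P)=P$, and since the Singer cycle $\langle r\rangle\cong\mathbb{Z}_8$ inside $S$ is transitive on the eight nonzero vectors, the stabilizer in $S$ of any $v\neq 0$ has order $2$, so $C_G(v)=P\rtimes\mathbb{Z}_2$ has order $18$ and is non-abelian. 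Hence for any abelian subgroup $A=A_3\times A_2$: if $|A_3|=9$ then $A\leq C_G(P)=P$; if $|A_3|=3$ then $|A_2|\leq 2$; if $A_3=1$ then $|A|\leq 8$ since $S$ is non-abelian. The largest abelian subgroup of $G$ therefore has order $9$, while $|G|=144>72$. So either the cited lemma carries a hypothesis lost in transcription, or it is incorrect as stated. For the purposes of this paper the defect is repairable along the lines you set up: in Theorem \ref{planar} the lemma is only ever applied to a non-solvable $G$, and then $G/C_G(P)\hookrightarrow\mathrm{Aut}(P)$ is solvable, forcing $C_G(P)$ to be non-solvable; in particular $C_G(P)$ is not a $p$-group, so Cauchy gives an element $y\in C_G(P)\setminus P$ of prime order $q\neq p$, and $P\times\langle y\rangle$ is abelian of order $q|P|\geq 14$ (resp.\ $\geq 18$) with no bound on $|G|$ needed at all. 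You should either add non-solvability of $G$ as a hypothesis or replace the lemma by this short direct argument.
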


%\begin{rem}\label{Remark}
% The group $A_5$ has a solvable subgroup of order $12$ and $|Sol(A_5)| = 1$. $S_5$ has a solvable subgroup of order $24$ and $|Sol(S_5)|=1$. The group $SL(2,5)$ has a solvable subgroup of order $24$ and $|Sol(SL(2,5))|=2$. The group $A_5 \times {\mathbb{Z}}_2$ a solvable subgroup of order $24$ and $|Sol(A_5 \times {\mathbb{Z}}_2)| = 2$.
% The group $PSL(3, 2)$ has a solvable subgroup of order ... and $|Sol(PSL(3, 2))| = 1$.   The group $GL(2,4)$ has a solvable subgroup of order 36 and $|Sol(G)|=3$.
%\end{rem}
\begin{lem} \label{lem120}
If $G$ is a non-solvable group of order not exceeding $120$ then $\Gamma_s(G)$ has a subgraph isomorphic to $K_{11}$ and $\gamma(\Gamma_s(G))\geq 5$.
\end{lem}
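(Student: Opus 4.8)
The plan is to prove that any non-solvable group $G$ with $|G| \leq 120$ contains a solvable subgroup $H$ with $|H| \geq 11$, for then Lemma~\ref{sol} forces the coset union to induce a clique, and the embedding bound from \eqref{kn} gives the genus estimate. First I would recall that the non-solvable groups of order at most $120$ are extremely restricted: by the classification of small simple groups, the only non-abelian simple group of order below $120$ is $A_5$ (order $60$), and $\mathrm{SL}(2,5)$ (order $120$), $S_5$ (order $120$), and direct products such as $A_5 \times C_2$ (order $120$) account for essentially all the composite non-solvable cases. So the strategy reduces to checking each of these finitely many groups explicitly and exhibiting an $11$-element solvable subset that generates a solvable subgroup.

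The key mechanism connecting solvable subgroups to cliques is that if $H$ is a solvable subgroup of $G$ with $|H| \geq 11$, then by Lemma~\ref{sol} the set $\langle H, Sol(G)\rangle = H\,Sol(G)$ is solvable, and every pair of its elements generates a solvable subgroup, so $\Gamma_s(G)[H \setminus Sol(G)]$ is a complete graph. Since the non-solvable groups under consideration are (close to) simple, $Sol(G)$ is small — trivial for $A_5, S_5$, of order $2$ for $\mathrm{SL}(2,5)$ — so the induced clique has at least $11$ vertices once $|H| \geq 11 + |Sol(G)|$. I would therefore locate large solvable (indeed soluble, e.g. dihedral, Frobenius, or Sylow-normalizer) subgroups: $A_5$ has subgroups $A_4$ of order $12$ and dihedral $D_{10}$ of order $10$; more usefully $A_4$ already gives $12 > 11$ elements, all pairs generating a subgroup of the solvable group $A_4$. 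Thus $\Gamma_s(A_5)$ contains $K_{12} \supseteq K_{11}$, and \eqref{kn} gives $\gamma(K_{11}) = \lceil \tfrac{8\cdot 7}{12}\rceil = \lceil 56/12\rceil = 5$, so $\gamma(\Gamma_s(A_5)) \geq 5$.

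For the order-$120$ groups I would supply the analogous witness subgroups: in $S_5$, the subgroup $S_4$ of order $24$ is solvable and yields $K_{24} \supseteq K_{11}$; in $\mathrm{SL}(2,5)$, a Sylow $2$-subgroup together with the central involution, or the preimage of $A_4 \leq A_5$ (a solvable subgroup of order $24$), again exceeds $11$ even after discarding $Sol(G)$; and for $A_5 \times C_2$ the subgroup $A_4 \times C_2$ of order $24$ works. In each case $|H \setminus Sol(G)| \geq 11$, so $\Gamma_s(G)$ contains $K_{11}$ and hence $\gamma(\Gamma_s(G)) \geq \gamma(K_{11}) = 5$.

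The main obstacle is not any single calculation but ensuring the case list is genuinely complete: I must verify that every non-solvable group of order $\leq 120$ appears in my enumeration. The cleanest route is to invoke the fact that a minimal non-solvable group has a composition factor isomorphic to a non-abelian simple group, the smallest of which is $A_5$; combined with $|G| \leq 120$ this pins $A_5$ as a composition factor and limits $G$ to the short list above (orders $60$ and $120$ only, since $2\cdot 60 = 120$ is the only non-solvable multiple of $60$ within range). Once completeness is secured, the remaining work — exhibiting an order-$\geq 11$ solvable subgroup avoiding $Sol(G)$ in each case — is routine, and the uniform conclusion $\gamma \geq \gamma(K_{11}) = 5$ follows from \eqref{kn}.
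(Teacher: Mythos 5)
Your proposal is correct and follows essentially the same route as the paper: enumerate the non-solvable groups of order at most $120$ (namely $A_5$, $A_5\times \mathbb{Z}_2$, $S_5$, $SL(2,5)$), note the sizes of their solvable radicals, exhibit a solvable subgroup of order $12$ or $24$ in each, and conclude that $\Gamma_s(G)$ contains $K_{11}$, whence $\gamma(\Gamma_s(G))\geq \gamma(K_{11})=5$ by \eqref{kn}. Your version merely spells out the witness subgroups and the completeness of the enumeration more explicitly than the paper does.
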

\begin{proof}
If $G$ is a non-solvable group and $|G| \leq 120$ then $G$ is isomorphic to $A_5$, $A_5 \times {\mathbb{Z}}_2$, $S_5$ or $SL(2,5)$. Note that $|Sol(A_5)| =  |Sol(S_5)| = 1$ and
 $|Sol(A_5 \times {\mathbb{Z}}_2)| = |Sol(SL(2,5))| = 2$. Also, $A_5$ has a solvable subgroup of order  $12$ and $S_5$, $A_5 \times {\mathbb{Z}}_2$, $SL(2,5)$ have solvable subgroups of order  $24$. 
%By Remark \ref{Remark}, 
It follows that $\Gamma_s(G)$ has a subgraph isomorphic to $K_{11}$. Therefore, by  \eqref{kn}, $\gamma(\Gamma_s(G)) \geq \gamma(K_{11})= 5$.
\end{proof}
\begin{thm}\label{planar}
The solvable graph of a finite non-solvable group is neither planar, toroidal, double-toroidal nor triple-toroidal. 
\end{thm}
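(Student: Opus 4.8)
The statement is equivalent to $\gamma(\Gamma_s(G)) \ge 4$, so I would argue by contradiction and assume $\gamma(\Gamma_s(G)) \le 3$. Since $\gamma(K_{10}) = 4$ by \eqref{kn} and the genus never increases on passing to a subgraph, $\Gamma_s(G)$ can contain no copy of $K_{10}$; equivalently, Proposition \ref{bound}(a) (with $m = \gamma(\Gamma_s(G)) \le 3$, whence the bound is at most $9$) gives $\omega(\Gamma_s(G)) \le 9$. The whole plan is to contradict this by manufacturing a clique on at least ten vertices. The basic device is Lemma \ref{sol}: for any solvable subgroup $H \le G$ the product $H\,Sol(G)$ is solvable, so the elements of $H \setminus Sol(G)$ are pairwise adjacent and form a clique of size $|H| - |H \cap Sol(G)|$ in $\Gamma_s(G)$.

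There are two sources of cliques to exploit. Applying the device to a single coset $x\,Sol(G)$ with $x \notin Sol(G)$, which sits inside the solvable group $\langle x\rangle Sol(G)$, produces a clique of size $|Sol(G)|$; hence either $|Sol(G)| \ge 10$ and we are already done, or $|Sol(G)| \le 9$. Assuming the latter, I would then turn $\omega(\Gamma_s(G)) \le 9$ into a bound on $|G|$. For a Sylow $p$-subgroup $P$, Proposition \ref{bound}(c) gives $|P| \le 9 + |P \cap Sol(G)|$, and $|P \cap Sol(G)|$ is a power of $p$ dividing $|Sol(G)| \le 9$. For $p \ge 11$ this forces $|P| \le 10 < p$, which is impossible; for the smaller primes it bounds the $7$-part by $7$, the $5$-part by $5$, the $3$-part by $9$ and the $2$-part by $16$. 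Thus every prime divisor of $|G|$ lies in $\{2,3,5,7\}$ and $|G| = 2^{a}3^{b}5^{c}7^{d}$ with $a \le 4$, $b \le 2$ and $c,d \le 1$; in particular $|G| \le 5040$, so only finitely many groups survive.

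It remains to contradict $\omega(\Gamma_s(G)) \le 9$ for the non-solvable groups left. Those of order at most $120$ are dispatched at once by Lemma \ref{lem120}, which already forces $\gamma \ge 5$. For the rest I would locate a solvable subgroup meeting $G \setminus Sol(G)$ in at least ten points. The conceptual engine is the quotient $\bar G = G/Sol(G)$: its solvable radical is trivial, so $\bar G$ contains a non-abelian simple subgroup $T$, and any solvable (Borel-type or point-stabilizer) subgroup $S \le T$ lifts to a solvable subgroup of $G$ whose complement of $Sol(G)$ has size $(|S|-1)\,|Sol(G)|$, which is at least $10$ as soon as $|S| \ge 12$. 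Lemma \ref{group7} is tailored to the two awkward configurations inside this finite list, namely when the Sylow $7$- or Sylow $3$-subgroup is normal: there it supplies an abelian, hence solvable, subgroup of order at least $14$ (resp. $18$), and the residual small orders it leaves aside fall under Lemma \ref{lem120}.

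The part I expect to be the real work is this last, finite verification: confirming that in every surviving order the clique produced genuinely retains at least ten vertices \emph{after} deleting $Sol(G)$, i.e. that $|H| - |H \cap Sol(G)| \ge 10$ once the $Sol(G)$-bookkeeping is carried out. This is delicate precisely because Lemma \ref{group7} only guarantees an abelian subgroup of order $14$ or $18$ while $|Sol(G)|$ may be as large as $9$, so one must check, order by order (the relevant groups being $A_5, S_5, A_5\times\mathbb{Z}_2, SL(2,5)$ and then $PSL(2,7), A_6, PSL(2,8), A_7, \dots$), that the abelian or simple-section subgroup dominates $Sol(G)$ by enough. The uniform fact that every non-abelian simple group contains a solvable subgroup of order at least $12$ — extremal for $A_5 \supset A_4$ — is what makes this bookkeeping come out in our favour; organising these observations into an exhaustive case check, rather than any single clever estimate, is the main obstacle, with everything else reducing to Proposition \ref{bound} and Lemma \ref{sol}.
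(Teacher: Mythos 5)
Your skeleton agrees with the paper's for the first half (assume $\gamma \le 3$, bound $|Sol(G)|$ by a coset clique, bound each Sylow subgroup via Proposition \ref{bound}(c), conclude $|G|$ has only the prime divisors $2,3,5,7$ with small exponents), though the paper squeezes harder at the first step: using Lemma \ref{lem-x-x^2} it takes the clique $xSol(G)\cup x^2Sol(G)$ of size $2|Sol(G)|$, getting $|Sol(G)|\le 4$ and $|G|\mid 2^3\cdot 3^2\cdot 5\cdot 7$, versus your $|Sol(G)|\le 9$ and $|G|\mid 2^4\cdot 3^2\cdot 5\cdot 7$. Where you genuinely diverge is the endgame. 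The paper runs a case analysis on $|Sol(G)|\in\{1,2,3,4\}$, manufacturing cliques from $\langle P,Sol(G)\rangle$ for Sylow $5$- and $7$-subgroups, and in the hardest case $|Sol(G)|=1$ counting Sylow $7$- and $3$-subgroups and invoking genus additivity for disjoint complete subgraphs (the result of Battle--Harary--Kodama--Youngs cited as \cite[Corollary 1]{bhky}) together with Lemma \ref{group7}; this is several pages of bookkeeping. You instead pass to $\bar G=G/Sol(G)$, note its solvable radical is trivial so it contains a non-abelian simple subgroup $T$, and pull back a solvable $S\le T$ with $|S|\ge 12$ to a solvable subgroup of $G$ meeting $G\setminus Sol(G)$ in exactly $(|S|-1)|Sol(G)|\ge 11$ points, yielding a $K_{10}$ and hence $\gamma\ge 4$. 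This is viable and in fact cleaner than the paper's Case 4: the only simple groups of order dividing $5040$ are $A_5$, $PSL(2,7)$, $A_6$, $PSL(2,8)$ and $A_7$, and each visibly contains a solvable subgroup of order at least $12$ ($A_4$, $S_4$, $3^2{:}4$, $2^3{:}7$, $A_4$ respectively). Two remarks on your write-up: the ``delicate bookkeeping'' you worry about at the end is not actually present in your construction, since for the full preimage $H$ of $S$ one has $H\cap Sol(G)=Sol(G)$ exactly, so $|H\setminus Sol(G)|=(|S|-1)|Sol(G)|$ with nothing further to check; and Lemma \ref{group7} becomes superfluous on your route (it is needed only for the paper's Sylow-normality cases). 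The one thing you still owe is to actually state and verify the finite list of simple groups and their solvable subgroups rather than deferring it, but that verification is routine.
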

\begin{proof}
%Suppose that $\gamma(\Gamma_s(G))\leq 3$. Let $x \in G \setminus Sol(G)$ such that $x^2 \not \in Sol(G)$. Such element exists by Lemma \ref{lem-x-x^2}. Since any two elements of the set $A = xSol(G) \cup x^2Sol(G)$ generate a solvable group, by Proposition \ref{bound}(a),  we have $2|Sol(G)| = |A| \leq \lfloor\frac{7+\sqrt{1+48\cdot 3}}{2}\rfloor = 9$. Thus $|Sol(G)|\leq 4$. Let $p$ be a prime divisor of $|G|$ and $P$ is a Sylow $p$-subgroup of $G$. Since $P$ is solvable, by Proposition \ref{bound}(c), we get $|P| \leq 9 + |P \cap Sol(G)| \leq 13$. If $|P| = 11$ or $13$ then $|P \cap Sol(G)| = 1$. Therefore, $\Gamma_s(G)[P \setminus Sol(G)] \cong K_{10}$ or $K_{12}$. Using \eqref{kn}, we get $\gamma(\Gamma_s(G)[P \setminus Sol(G)]) = 4$  or $6$. Therefore,  $\gamma(\Gamma_s(G)) \geq \gamma(\Gamma_s(G)[P \setminus Sol(G)]) \geq 4$, a contradiction. Thus $|P| \leq 9$ and hence $p \leq 7$. This shows that $|G|$ divisor  $2^3.3^2.5.7$.

Let $G$ be a finite non-solvable group. Note that it is enough to show  $\gamma(\Gamma_s(G))\geq 4$ to complete the proof. Suppose that $\gamma(\Gamma_s(G))\leq 3$. Let $x \in G \setminus Sol(G)$ such that $x^2 \not \in Sol(G)$. Such element exists by Lemma \ref{lem-x-x^2}. Since any two elements of the set $A = xSol(G) \cup x^2Sol(G)$ generate a solvable group, by Proposition \ref{bound}(a),  we have $2|Sol(G)| = |A| \leq \left\lfloor\frac{7+\sqrt{1+48\cdot 3}}{2}\right\rfloor = 9$. Thus $|Sol(G)|\leq 4$. Let $p$ be a prime divisor of $|G|$ and $P$ is a Sylow $p$-subgroup of $G$. Since $P$ is solvable, by Proposition \ref{bound}(c), we get $|P| \leq 9 + |P \cap Sol(G)| \leq 13$. If $|P| = 11$ or $13$ then $|P \cap Sol(G)| = 1$. Therefore, $\Gamma_s(G)[P \setminus Sol(G)] \cong K_{10}$ or $K_{12}$. Using \eqref{kn}, we get $\gamma(\Gamma_s(G)[P \setminus Sol(G)]) = 4$  or $6$. Therefore,  $\gamma(\Gamma_s(G)) \geq \gamma(\Gamma_s(G)[P \setminus Sol(G)]) \geq 4$, a contradiction. Thus $|P| \leq 9$ and hence $p \leq 7$. This shows that $|G|$ divides  $2^3.3^2.5.7$.

We consider the following cases.

\noindent \textbf{Case 1.}  $|Sol(G)|=4$.

If $H$ is a Sylow $p$-subgroup   of $G$ where $p =  5$ or $7$ then    $\langle H, Sol(G)\rangle$ is solvable since   $H$ is solvable (by Lemma \ref{sol}). We have $|H \cap Sol(G)| = 1$   and $|\langle H, Sol(G)\rangle| =  20, 28$ according as $p =  5, 7$ respectively.  Therefore $\Gamma_s(G)[\langle H,Sol(G)\rangle \setminus Sol(G)]\cong K_{16}$ or $K_{24}$. By \eqref{kn} we get $\gamma(\Gamma_s(G))\geq \gamma(\Gamma_s(G)[\langle H,Sol(G)\rangle \setminus Sol(G)]) \geq 13$, which is a contradiction.

%Let $K$ be a Sylow $3$-subgroup of $G$. If  $|K| = 9$ then $\langle K, Sol(G)\rangle$ is solvable since   $K$ is solvable (by Lemma \ref{sol}). We have $|K \cap Sol(G)| = 1$   and $|\langle H, Sol(G)\rangle| =  36$.  Therefore $\Gamma_s(G)[\langle K, Sol(G)\rangle \setminus Sol(G)]\cong K_{32}$. By \eqref{kn} we get $\gamma(\Gamma_s(G))\geq \gamma(\Gamma_s(G)[\langle H, Sol(G)\rangle \setminus Sol(G)]) = 68$, which is a contradiction.
%Thus   $|G|$ is a divisor of $2^3.3$ and hence $|G| \leq 24$,  a contradiction.

Thus $|G|$ is a divisor of $72$. Therefore, by Lemma \ref{lem120} we have $\gamma(\Gamma_s(G)) \geq 5$, a contradiction.  

\noindent \textbf{Case 2.} $|Sol(G)|=3$. 

If $H$ is a Sylow $p$-subgroup   of $G$ where $p =  5$ or $7$ then    $\langle H, Sol(G)\rangle$ is solvable since   $H$ is solvable (by Lemma \ref{sol}). We have $|H \cap Sol(G)| = 1$   and $|\langle H, Sol(G)\rangle| =  15, 21$ according as $p =  5, 7$ respectively.  Therefore $\Gamma_s(G)[\langle H,Sol(G)\rangle \setminus Sol(G)]\cong K_{12}$ or $K_{18}$. By \eqref{kn} we get $\gamma(\Gamma_s(G))\geq \gamma(\Gamma_s(G)[\langle H,Sol(G)\rangle \setminus Sol(G)]) \geq 6$, which is a contradiction.

% Let $K$ be a Sylow $2$-subgroup of $G$. If  $|K| = 8$ then $\langle K, Sol(G)\rangle$ is solvable since   $K$ is solvable (by Lemma \ref{sol}). We have $|K \cap Sol(G)| = 1$   and $|\langle H, Sol(G)\rangle| =  24$.  Therefore $\Gamma_s(G)[\langle K, Sol(G)\rangle \setminus Sol(G)]\cong K_{21}$. By \eqref{kn} we get $\gamma(\Gamma_s(G))\geq \gamma(\Gamma_s(G)[\langle H, Sol(G)\rangle \setminus Sol(G)]) = 26$, which is a contradiction.
% Thus   $|G|$ divides $2^2.3^2$ and hence $|G| \leq 36$,  a contradiction.

Thus $|G|$ is a divisor of $72$. Therefore, by Lemma \ref{lem120} we have $\gamma(\Gamma_s(G)) \geq 5$, a contradiction.  

\noindent \textbf{Case 3.} $|Sol(G)|=2$. 

If $H$ is a Sylow $7$-subgroup   of $G$   then    $\langle H, Sol(G)\rangle$ is solvable since   $H$ is solvable (by Lemma \ref{sol}). We have $|H \cap Sol(G)| = 1$   and $|\langle H, Sol(G)\rangle| =  14$.  Therefore $\Gamma_s(G)[\langle H,Sol(G)\rangle \setminus Sol(G)]\cong K_{12}$. By \eqref{kn} we get $\gamma(\Gamma_s(G))\geq \gamma(\Gamma_s(G)[\langle H,Sol(G)\rangle \setminus Sol(G)]) \geq 6$, which is a contradiction.
 Let $K$ be a Sylow $3$-subgroup of $G$. If  $|K| = 9$ then $\langle K, Sol(G)\rangle$ is solvable since   $K$ is solvable (by Lemma \ref{sol}). We have $|K \cap Sol(G)| = 1$   and $|\langle K, Sol(G)\rangle| =  18$.  Therefore $\Gamma_s(G)[\langle K, Sol(G)\rangle \setminus Sol(G)]\cong K_{16}$. By \eqref{kn} we get $\gamma(\Gamma_s(G))\geq \gamma(\Gamma_s(G)[\langle K, Sol(G)\rangle \setminus Sol(G)]) = 13$, which is a contradiction.
%Thus   $|G|$ divides $2^3.3.5$ and hence $|G| = 60$ or $120$. By Remark \ref{Remark} we have $G\cong SL(2,5)$ and $\Gamma_s(G)$ has a subgraph isomorphic to $K_{22}$. Therefore, by \eqref{kn}, $\gamma(\Gamma_s(G)) \geq \gamma(K_{22}) = 29$, which is a contradiction. 

Thus $|G|$ is a divisor of $120$. Therefore, by Lemma \ref{lem120} we have $\gamma(\Gamma_s(G)) \geq 5$, a contradiction. 

\noindent \textbf{Case 4.} $|Sol(G)|=1$.

In this case, first we shall show that $7\nmid |G|$. On the contrary, assume that $7\mid |G|$.
% the Sylow $7$-subgroup of $G$ is unique.
Let $n$ be the number of Sylow $7$-subgroup of $G$. Then $n \mid 2^3.3^2.5$ and $n \equiv 1 (\mod 7)$. If $n \ne 1$  then $n \geq 8$. Let $H_1,  \dots, H_8$  be eight distinct Sylow $7$-subgroup of $G$. Then the subgraph
induced $\Gamma_S(G)[H_i \setminus Sol(G)]$ for each $1 \leq i \leq 8$ will contribute $\gamma(\Gamma_S(G)[H_i \setminus Sol(G)]) = 1$ to the genus of $\Gamma_S(G)$. Thus $$
\gamma(\Gamma_S(G)) \geq \overset{8}{\underset{i = 1}{\sum}}\gamma(\Gamma_S(G)[H_i \setminus Sol(G)]) = 8,
$$ a contradiction. Therefore, Sylow $7$-subgroup of $G$ is unique and hence normal. 
%Thus the genus of the solvable graph of G will be greater than or equal to the sum of
%the genus of each of these induced subgraph, that is genus of the solvable graph of G
%is greater than or equal to 8. Thus if 
%$(\Gamma_s(G)) \leq 3$, then the sylow 7-subgroup of G
%must be normal in G. 
Since we have started with a non-solvable group, by Lemma \ref{group7}, it follows that $G$ has an abelian subgroup of order atleast $14$. Therefore, by \eqref{kn} we have $\gamma(\Gamma_S(G)) \geq \gamma(K_{13})  = 8$, a contradiction. Hence, $|G|$ is a divisor of  $2^3.3^2.5$.

Now, we shall show that $9\nmid |G|$. Assume that, on the contrary, $9\mid |G|$. If Sylow $3$-subgroup of $G$ is not normal in $G$, then the number
of Sylow $3$-subgroup is greater than or equal to $4$. Let $H_1, H_2, H_3$ be three Sylow $3$-subgroup of $G$. Then the  induced subgraph $\Gamma_S(G)[H_1\setminus Sol(G)] \cong
K_8$ and so it contributes $\gamma(\Gamma_S(G)[H_1\setminus Sol(G)]) = 2$ to
the genus of $\Gamma_S(G)$. If $|H_1 \cap H_2| = 1$, then the induced subgraph $\Gamma_S(G)[H_2 \setminus Sol(G)] \cong
K_8$ and so it contributes $+2$ to the genus $\Gamma_S(G)$. Thus 
$$
\gamma(\Gamma_S(G)) \geq \gamma(\Gamma_S(G)[(H_1 \cup H_2)\setminus Sol(G)]) = 4
$$
which is a contradiction. So assume that $|H_1 \cap H_2| = 3$. Similarly $|H_1 \cap H_3| = 3$ and $|H_2 \cap H_3| = 3$. Let $M = H_2 \setminus H_1$. Then $|M| = 6$. Also note that if $L = H_1 \cup H_2$ and $K = H_3 \setminus L$, then $|K| \geq 4$. Also $H_1 \cap M = H_1 \cap K = M \cap K = \emptyset$.

If $|K| \geq 5$ then $H_1$ contribute $+2$ to genus of $\Gamma_S(G)$, $M$ and $K$ each
contribute $+1$ to genus of $\Gamma_S(G)$. Hence genus of $\Gamma_S(G)$ is greater than or equal to $4$, a contradiction.

Assume that $|K| = 4$. In this case $|M \cap H_3| = 2$. Let $x \in M \cap H_3$.
%such that $x \notin H_1$.
  Then $H_1$ contribute $+2$ to
genus of $\Gamma_S(G)$, $M \setminus \{x\}$ and $K \cup \{x\}$ each contribute $+1$ to genus of $\Gamma_S(G)$. Hence genus of $\Gamma_S(G)$ is greater than
or equal to $4$, a contradiction.

These show that the Sylow $3$-subgroup of $G$ is unique and hence  normal in $G$. Therefore, by Lemma \ref{group7} and Lemma \ref{lem120},  $G$ has an abelian subgroup $A$ of order at least $18$. Hence, 
$$
\gamma(\Gamma_S(G)) \geq \gamma(\Gamma_S(G)[A \setminus Sol(G)]) \geq \gamma(K_{17}) =  16
$$
which is a contradiction.
 
 It follows that $9 \nmid |G|$ and $G$ is a divisor of $120$. Therefore, by Lemma \ref{lem120} we get  $\gamma(\Gamma_S(G)) \geq 5$, a contradiction. Hence, $\gamma(\Gamma_s(G))\geq 4$ and the result follows.
\end{proof}
%
%\begin{rem} The above proposition tell us that there is no finite non-solvable group whose solvable graph is planar, toroidal, double-toroidal or triple-toroidal.
%\end{rem}

The above theorem gives that $\gamma(\Gamma_s(G))\geq 4$. Usually,  genera of  solvable graphs of finite non-solvable groups are very large. For example, if $G$ is the smallest non-solvable group $A_5$ then $\Gamma_s(G)$ has $59$ vertices and $571$ edges. Also  $\gamma(\Gamma_s(G))\geq 571/6 - 59/2 + 1 = 68$ (follows from \cite[Corollary 6--14]{whit}).    The following theorem shows that the crosscap number of the solvable graph of a finite non-solvable group is greater than $1$. 

\begin{prop} 
The solvable graph of a finite non-solvable group is not projective.
%There is no projective solvable graph.
\end{prop}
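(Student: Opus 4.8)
The plan is to show $\bar{\gamma}(\Gamma_s(G)) \geq 2$ for every finite non-solvable group $G$, which is exactly the statement that $\Gamma_s(G)$ is not projective. The strategy parallels the proof of Theorem \ref{planar}: I would locate inside $\Gamma_s(G)$ a subgraph whose crosscap is known to exceed $1$, and then invoke the monotonicity $\bar{\gamma}(\Gamma) \geq \bar{\gamma}(\Gamma_0)$ for subgraphs $\Gamma_0$. The key arithmetic input is equation \eqref{kbmn}: for a complete graph $K_n$ one has $\bar{\gamma}(K_n) = \lceil \frac{1}{6}(n-3)(n-4)\rceil$ when $n \geq 3$ and $n \neq 7$, with $\bar{\gamma}(K_7) = 3$. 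In particular $\bar{\gamma}(K_n) \geq 2$ already for $n = 6$, since $\lceil \frac{1}{6}\cdot 3 \cdot 2\rceil = 1$ forces me to go slightly higher: $\bar{\gamma}(K_7) = 3 > 1$ and $\bar{\gamma}(K_8) = \lceil \frac{20}{6}\rceil = 4 > 1$. So it suffices to exhibit a clique on at least $7$ vertices (or, more cleanly, to produce a subgraph isomorphic to $K_{11}$, as was done for the genus arguments).

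First I would reduce to the small cases. Since a projective graph has crosscap $1$ and every subgraph then has crosscap at most $1$, I can run the same bounding machinery from Proposition \ref{bound}: a projective $\Gamma_s(G)$ cannot contain any $K_n$ with $\bar{\gamma}(K_n) \geq 2$, hence cannot contain $K_7$. Combined with Proposition \ref{bound}(a) applied to the set $A = xSol(G) \cup x^2 Sol(G)$ for an $x$ provided by Lemma \ref{lem-x-x^2}, and to Sylow subgroups via Proposition \ref{bound}(c), this tightly constrains $|Sol(G)|$ and the prime divisors of $|G|$, forcing $|G|$ into a short list of divisors. Indeed, if $\Gamma_s(G)$ contained $K_{11}$ then by \eqref{kbmn} we would have $\bar{\gamma}(\Gamma_s(G)) \geq \bar{\gamma}(K_{11}) = \lceil \frac{56}{6}\rceil = 10 > 1$, a contradiction; so it is enough to show that $\Gamma_s(G)$ always contains a copy of $K_{11}$ (or even just $K_7$).

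The cleanest route is therefore to lean directly on Lemma \ref{lem120}: I would argue, following the case analysis in the proof of Theorem \ref{planar} verbatim but replacing each genus bound $\gamma(K_n)$ by the corresponding crosscap bound $\bar{\gamma}(K_n)$ from \eqref{kbmn}, that $|G|$ must divide $120$. Once $|G| \leq 120$, Lemma \ref{lem120} guarantees that $\Gamma_s(G)$ contains a subgraph isomorphic to $K_{11}$, and then $\bar{\gamma}(\Gamma_s(G)) \geq \bar{\gamma}(K_{11}) = 10 \geq 2$, so $\Gamma_s(G)$ is not projective. Alternatively, and more economically, I note that every solvable subgroup $H$ with $|H\setminus Sol(G)| \geq 7$ already yields a clique $K_{\geq 7}$ with $\bar{\gamma} \geq 3 > 1$, so the same Sylow-and-radical estimates that were used for genus go through with even more room to spare, since the crosscap thresholds in \eqref{kbmn} are crossed for smaller $n$ than the genus thresholds in \eqref{kn}.

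The main obstacle I anticipate is not the large cliques but the borderline counting in the low-order cases, exactly as in Case 4 of Theorem \ref{planar}: when $Sol(G) = 1$ and one must rule out $7 \mid |G|$ and $9 \mid |G|$ using the Sylow-subgroup multiplicity arguments and Lemma \ref{group7}. Here one should be careful that the crosscap analogue of the disjoint-union bound is available; since $2K_5$ is known not to be projective (as noted in the excerpt via \cite{ghw}), a pair of disjoint $K_5$'s already forces $\bar{\gamma} \geq 2$, which is precisely the tool needed to replace the additive genus estimates $\gamma(\Gamma_S(G)) \geq \sum_i \gamma(\cdots)$ used for multiple Sylow subgroups. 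Thus the one genuinely new ingredient, beyond mechanically substituting \eqref{kbmn} for \eqref{kn}, is the observation that two disjoint solvable subgroups each contributing a $K_5$ already defeat projectivity; everything else is a transcription of the Theorem \ref{planar} argument with weaker (hence easily met) numerical demands.
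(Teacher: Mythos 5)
Your proposal is correct and follows essentially the same route as the paper's proof: bound $|Sol(G)|\leq 3$ via the clique on $xSol(G)\cup x^2Sol(G)$, bound the Sylow subgroups so that $|G|$ divides $2^3\cdot 3^2\cdot 5\cdot 7$, use the non-projectivity of $2K_5$ to force normality of the Sylow $7$- and $3$-subgroups when $7$ or $9$ divides $|G|$, and finish with Lemma \ref{group7} and Lemma \ref{lem120} producing a $K_{11}$. You also correctly single out the only genuinely new ingredient beyond transcribing the genus argument, namely the $2K_5$ fact from \cite{ghw}, which is exactly what the paper uses.
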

\begin{proof}
Suppose $G$ is a finite non-solvable group whose solvable graph is projective.  Note that if $\Gamma_s(G)$ has a subgraph isomorphic to $K_n$   then,  by   \eqref{kbmn}, we must have $n\leq 6$. Let $x\in G$, such that $x,x^2 \not \in Sol(G)$. Then $\Gamma_s(G)[xSol(G)\cup x^2Sol(G)]\cong K_{2|Sol(G)|}$. Therefore, $2|Sol(G)|\leq 6$ and hence $|Sol(G)|\leq 3$. 

Let $p\mid |G|$ be a prime and $P$ be a Sylow $p$-subgroup of $G$. Then $\Gamma_s(G)[P\setminus Sol(G)]\cong K_{|P\setminus Sol(G)|}$ since $P$ is solvable. Therefore, $|P\setminus Sol(G)| = |P| - |P\cap Sol(G)| \leq 6$ and hence $|P|\leq 9$. This shows that $|G|$ is a divisor of  $2^3.3^2.5.7$. 

%Proceeding as in the proof of Theorem \ref{planar} we get that $|G|$ is a divisor of $120$. Therefore, by Lemma \ref{lem120}, $\Gamma_s(G)$ has a subgraph isomorphic to $K_{11}$; which is a contradiction. Hence, the result follows. 
If $7\mid |G|$  then the Sylow $7$-subgroup of $G$ is unique and hence normal in $G$; otherwise, let $H$ and $K$ be two Sylow $7$-subgroup of $G$. Then $|H \cap K| = |H \cap Sol(G)| = |K \cap Sol(G)| = 1$. Therefore, $\Gamma_s(G)[(H\cup K)\setminus Sol(G)]$ has a subgraph isomorphic to $2K_6$. Hence, $\Gamma_s(G)$ has a subgraph isomorphic to  $2K_5$, which is a contradiction. Similarly, if $9\mid |G|$, then the Sylow $3$-subgroup of $G$ is normal in $G$. Therefore, by Lemma \ref{group7}, it follows that $|G|\leq 72$ or $|G|$ is a divisor of $2^3.3.5$. In both the cases, by Lemma \ref{lem120}, $\Gamma_s(G)$  has  complete subgraphs isomorphic to $K_{11}$, which is a contradiction. This completes the proof.
\end{proof}

%\section{Diameter of the Solvable Graph}

We conclude this section, by an observation and a couple of problems regarding the diameter and connectedness of $\Gamma_s(G)$. 
%\begin{rem}
 Using the following programme in GAP\cite{gap}, we see that the solvable graph  of the groups $A_5, S_5, A_5 \times {\mathbb{Z}}_2, SL(2,5), PSL(3,2)$ and $GL(2,4)$ are connected with diameter  $2$. The solvable graphs of $S_6$ and $A_6$ are connected with diameters greater than $2$.

\vspace{.5cm}

\begin{verbatim}
g:=PSL(3,2);
sol:=RadicalGroup(g);
L:=[];
gsol:=Difference(g,sol);
for x in gsol do
 AddSet(L,[x]);
 for y in Difference(gsol,L) do
  if IsSolvable(Subgroup(g,[x,y]))=true then
   break;
  fi;
  i:=0;
  for z in gsol do
   if IsSolvable(Subgroup(g,[x,z]))=true and IsSolvable(Subgroup(g,[z,y]))=true 
   then
    i:=1;
    break;
   fi;
  od;
  if i=0 then
   Print("Diameter>2");
   Print(x,"   ",y);
  fi;
 od;
od;
\end{verbatim}
%\end{rem}

\vspace{.5cm}

 In this connection, we have the following problems.
 
\begin{prob} Is  $\Gamma_s(G)$ connected for any finite non-solvable group $G$?
\end{prob}

\begin{prob} 
%Let $G$ be a finite non-solvable group. 
%Suppose $\Gamma_s(G)$ is connected.
% then what are the possible diameter for $\Gamma_s(G)$? 
Is there any finite bound for the diameter of $\Gamma_s(G)$ when $\Gamma_s(G)$ is connected?
\end{prob}

\section{Relations with solvability degree}

The solvability degree of a finite group $G$ is defined by the following ratio
\begin{align*}
P_s(G) := \frac{|\{(u, v) \in G \times G : \langle u, v\rangle \text{ is solvable}\}|}{|G|^2}.
\end{align*}
Using the  solvability criterion (see \cite[Section 1]{Dolfi12}),  

``A finite group is solvable if and only if every pair of its elements generates a
solvable group"

\noindent for finite groups we have  $G$ is solvable if and only if its solvability degree is $1$. It was shown in \cite[Theorem A]{gW2000} that $P_s(G) \leq \frac{11}{30}$ for any finite non-solvable group $G$. In this section, we study a few properties of $P_s(G)$ and  derive a connection between $P_s(G)$ and $\Gamma_s(G)$ for finite non-solvable groups $G$.   
We begin with the following lemma.
\begin{lem}\label{formula_Ps(G)}
Let $G$ be a finite group. Then $P_s(G) = \frac{1}{|G|^2}\underset{u \in G}{\sum}|Sol_G(u)|$.
\end{lem}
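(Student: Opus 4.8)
The plan is to evaluate the numerator of $P_s(G)$ by a straightforward slicing (double-counting) argument: I would partition the set of ordered pairs $\{(u,v) \in G \times G : \langle u,v\rangle \text{ is solvable}\}$ according to their first coordinate $u$, and count each fibre separately using the definition of the solvabilizer.

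First I would fix $u \in G$ and recall that, by definition, $Sol_G(u) = \{v \in G : \langle u, v \rangle \text{ is solvable}\}$. Hence the set of $v \in G$ for which the pair $(u,v)$ contributes to the numerator is precisely $Sol_G(u)$, and so the fibre $\{u\} \times \{v : \langle u,v\rangle \text{ solvable}\}$ has cardinality $|Sol_G(u)|$. Summing over all choices of the first coordinate, the set $\{(u,v) \in G \times G : \langle u,v\rangle \text{ is solvable}\}$ is the disjoint union $\bigsqcup_{u \in G}\bigl(\{u\}\times Sol_G(u)\bigr)$, so its cardinality equals $\sum_{u \in G}|Sol_G(u)|$. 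Dividing by $|G|^2$ then yields $P_s(G) = \frac{1}{|G|^2}\sum_{u \in G}|Sol_G(u)|$, as claimed.

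There is essentially no obstacle here; the only point worth noting is that the identity follows purely from the defining property of $Sol_G(u)$ and does not require that $Sol_G(u)$ be a subgroup (which it need not be, as recalled in the introduction). It may be worth remarking that since the relation ``$\langle u,v\rangle$ is solvable'' is symmetric in $u$ and $v$, one could equally well slice by the second coordinate and obtain the same sum; this symmetry is a useful sanity check but is not needed for the computation.
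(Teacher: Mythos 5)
Your argument is correct and is exactly the paper's proof: both decompose the set $\{(u,v)\in G\times G:\langle u,v\rangle\text{ is solvable}\}$ as the disjoint union $\bigsqcup_{u\in G}\bigl(\{u\}\times Sol_G(u)\bigr)$ and divide by $|G|^2$. No further comment is needed.
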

\begin{proof}
Let $\mathcal{S} =  \{(u, v) \in G \times G : \langle u, v\rangle \text{ is solvable}\}$. Then 
\[
\mathcal{S} = \underset{u \in G}{\cup}(\{u\} \times \{v \in G : \langle u, v\rangle \text{ is solvable}\}) = \underset{u \in G}{\cup}(\{u\} \times Sol_G(u)).
\]
Therefore, $|\mathcal{S}| = \underset{u \in G}{\sum}|Sol_G(u)|$. Hence, the result follows. 
\end{proof}

\begin{cor}
$|G|P_s(G)$ is an integer for any finite group $G$.
\end{cor}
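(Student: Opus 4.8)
The plan is to use the formula from Lemma \ref{formula_Ps(G)}, which expresses $P_s(G)$ in terms of the sizes of the solvabilizers $Sol_G(u)$. The key observation is that this formula rewrites $P_s(G)$ with a denominator of $|G|^2$, so that $|G|P_s(G) = \frac{1}{|G|}\underset{u \in G}{\sum}|Sol_G(u)|$. To prove this quantity is an integer, I would establish that $|G|$ divides the sum $\underset{u \in G}{\sum}|Sol_G(u)|$.

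First I would recall the cited fact from the introduction (attributed to \cite[Proposition 2.13]{hr}) that $|C_G(u)|$ divides $|Sol_G(u)|$ for every $u \in G$. The natural strategy is to group the summands by conjugacy classes. The point is that for a fixed element $u$, all conjugates $u^g$ have solvabilizers of the same size, since $Sol_G(u^g) = (Sol_G(u))^g$ (conjugation is an automorphism preserving solvability of generated subgroups), and the size of a conjugacy class of $u$ equals $[G : C_G(u)]$. Thus, summing over a single conjugacy class $\mathcal{C}$ containing $u$ contributes $[G:C_G(u)] \cdot |Sol_G(u)|$ to the total.

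The main step is then to show each such contribution is divisible by $|G|$. Writing $[G:C_G(u)] \cdot |Sol_G(u)| = \frac{|G|}{|C_G(u)|}\cdot|Sol_G(u)| = |G| \cdot \frac{|Sol_G(u)|}{|C_G(u)|}$, and using that $|C_G(u)|$ divides $|Sol_G(u)|$, the factor $\frac{|Sol_G(u)|}{|C_G(u)|}$ is a positive integer. Hence each conjugacy-class block contributes an integer multiple of $|G|$, and summing over all conjugacy classes shows that $|G|$ divides $\underset{u \in G}{\sum}|Sol_G(u)|$. Therefore $|G|P_s(G) = \frac{1}{|G|}\underset{u \in G}{\sum}|Sol_G(u)|$ is an integer.

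I expect the main (and only real) obstacle to be verifying cleanly that $Sol_G(u^g) = (Sol_G(u))^g$ so that solvabilizer sizes are constant on conjugacy classes; this is where one must be careful, but it follows directly from the fact that $\langle u^g, v\rangle = \langle u, v^{g^{-1}}\rangle^g$ is solvable precisely when $\langle u, v^{g^{-1}}\rangle$ is solvable. Everything else is an application of the already-cited divisibility $|C_G(u)| \mid |Sol_G(u)|$ together with the orbit–stabilizer count for conjugacy classes, so the argument is short.
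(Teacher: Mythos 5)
Your proof is correct, but it takes a different (more self-contained) route than the paper. The paper's proof is a two-line citation: it invokes Proposition 2.16 of \cite{hr}, which states directly that $|G|$ divides $\sum_{u \in G}|Sol_G(u)|$, and then applies Lemma \ref{formula_Ps(G)}. You instead prove that divisibility from scratch: you use the conjugation identity $Sol_G(u^g) = (Sol_G(u))^g$ to see that $|Sol_G(u)|$ is constant on conjugacy classes, apply orbit--stabilizer to write the contribution of the class of $u$ as $[G:C_G(u)]\,|Sol_G(u)| = |G|\cdot\frac{|Sol_G(u)|}{|C_G(u)|}$, and then use the divisibility $|C_G(u)| \mid |Sol_G(u)|$ (Proposition 2.13 of \cite{hr}, which the paper does quote in its introduction) to conclude each class contributes an integer multiple of $|G|$. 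Your verification of $\langle u^g, v\rangle = \langle u, v^{g^{-1}}\rangle^g$ is exactly the right way to justify the conjugation step. What your approach buys is independence from the stronger external result (Proposition 2.16), replacing it with the weaker and already-cited Proposition 2.13 plus elementary counting; what the paper's approach buys is brevity. In all likelihood your argument reconstructs the proof of the cited Proposition 2.16, so the two routes ultimately rest on the same ideas.
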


\begin{proof}
By Proposition 2.16 of \cite{hr} we have that $|G|$ divides $\underset{u \in G}{\sum}|Sol_G(u)|$. Hence, the result follows from Lemma \ref{formula_Ps(G)}.
\end{proof}
We have the following lower bound for $P_s(G)$.
\begin{thm}\label{bound-lower}
For  any finite group $G$,
\[
P_s(G) \geq \frac{|Sol(G)|}{|G|} + \frac{2(|G| - |Sol(G)|)}{|G|^2}.
\]
\end{thm}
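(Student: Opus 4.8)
The plan is to use the formula for $P_s(G)$ established in Lemma \ref{formula_Ps(G)}, namely $P_s(G) = \frac{1}{|G|^2}\sum_{u \in G}|Sol_G(u)|$, and bound each term $|Sol_G(u)|$ from below by exploiting the definition of the solvable radical. The key observation is that the elements of $G$ split naturally into two classes: those lying in $Sol(G)$ and those lying in $G \setminus Sol(G)$, and these two classes admit different (both elementary) lower bounds on $|Sol_G(u)|$.

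First I would handle the elements $u \in Sol(G)$. By definition of the solvable radical, $\langle u, v \rangle$ is solvable for \emph{every} $v \in G$ whenever $u \in Sol(G)$, so $Sol_G(u) = G$ and hence $|Sol_G(u)| = |G|$. This contributes exactly $|Sol(G)| \cdot |G|$ to the sum. Next I would handle the elements $u \in G \setminus Sol(G)$. For any such $u$, I claim $|Sol_G(u)| \geq 2$: indeed, $u \in Sol_G(u)$ always, since $\langle u, u \rangle = \langle u \rangle$ is cyclic and therefore solvable, and moreover $Sol(G) \subseteq Sol_G(u)$ for every $u$ (this is the containment noted in the introduction, $Z(G) \subseteq Sol(G) \subseteq Sol_G(u)$). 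Thus $Sol_G(u)$ contains both $u$ and all of $Sol(G)$; since $u \notin Sol(G)$, these give at least $|Sol(G)| + 1 \geq 2$ distinct elements. This yields $|Sol_G(u)| \geq 2$ for each of the $|G| - |Sol(G)|$ vertices outside the radical.

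Combining these two estimates, I would write
\[
\sum_{u \in G}|Sol_G(u)| = \sum_{u \in Sol(G)}|Sol_G(u)| + \sum_{u \in G \setminus Sol(G)}|Sol_G(u)| \geq |Sol(G)|\,|G| + 2\bigl(|G| - |Sol(G)|\bigr),
\]
and then divide through by $|G|^2$ to obtain exactly the claimed bound
\[
P_s(G) \geq \frac{|Sol(G)|}{|G|} + \frac{2(|G| - |Sol(G)|)}{|G|^2}.
\]

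I do not anticipate a serious obstacle here, as the argument is a direct counting estimate once the two cases are separated; the only point requiring a little care is justifying the lower bound $|Sol_G(u)| \geq 2$ for $u \notin Sol(G)$, which follows immediately from $u \in Sol_G(u)$ together with $Z(G) \subseteq Sol_G(u)$ (so in fact the crude bound of $2$ already suffices even when $Z(G)$ is trivial, since $u$ itself is counted). One could sharpen the estimate by using $|Sol_G(u)| \geq |Sol(G)| + 1$ for such $u$, but the stated bound only requires the value $2$, so I would simply use that to match the desired inequality exactly.
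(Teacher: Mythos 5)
Your proposal is correct and follows essentially the same route as the paper: it applies Lemma \ref{formula_Ps(G)}, splits the sum over $Sol(G)$ and $G\setminus Sol(G)$, uses $|Sol_G(u)|=|G|$ on the first part and $|Sol_G(u)|\geq 2$ on the second. The only (immaterial) difference is that you justify $|Sol_G(u)|\geq 2$ directly from $u\in Sol_G(u)$ and $Sol(G)\subseteq Sol_G(u)$, whereas the paper invokes the containment $C_G(u)\subseteq Sol_G(u)$ together with $|C_G(u)|\geq 2$; both are valid.
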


\begin{proof}
By Lemma \ref{formula_Ps(G)}, we have
\begin{align}\label{Ps(G)-bound1}
|G|^2P_s(G) &= \underset{u \in Sol(G)}{\sum}|Sol_G(u)| + \underset{u \in G \setminus Sol(G)}{\sum}|Sol_G(u)|\nonumber\\
&= |G||Sol(G)| + \underset{u \in G \setminus Sol(G)}{\sum}|Sol_G(u)|. 
\end{align}
By Proposition 2.13 of \cite{hr}, $|C_G(u)|$ is a divisor of $|Sol_G(u)|$ for all $u \in G$ where $C_G(u) = \{v \in G : uv = vu\}$, the centralizer of $u \in G$. Since $|C_G(u)| \geq 2$ for all $u \in G$ we have $|Sol_G(u)| \geq 2$ for all $u \in G$. Therefore
\[
\underset{u \in G \setminus Sol(G)}{\sum}|Sol_G(u)| \geq  2(|G| - |Sol(G)|).
\] 
Hence, the result follows from \eqref{Ps(G)-bound1}. 
\end{proof}
The following theorem shows that $P_s(G) > \Pr(G)$ for any finite non-solvable group where $\Pr(G)$ is the commuting probability of $G$ (see \cite{Gural06}).
\begin{thm}\label{bound-Pr}
Let $G$ be a finite group. Then $P_s(G) \geq \Pr(G)$ with equality if and only if $G$ is a solvable group.
\end{thm}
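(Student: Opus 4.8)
The plan is to compare $P_s(G)$ and $\Pr(G)$ term by term using the pointwise containment $C_G(u) \subseteq Sol_G(u)$. Recall that by the analogue of Lemma \ref{formula_Ps(G)} for the commuting probability we have
\[
\Pr(G) = \frac{1}{|G|^2}\underset{u \in G}{\sum}|C_G(u)|,
\]
since $\Pr(G)$ counts commuting pairs $(u,v)$ and the set of $v$ commuting with a fixed $u$ is exactly $C_G(u)$. Thus both quantities are averages of the cardinalities $|C_G(u)|$ and $|Sol_G(u)|$ over $u \in G$.

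First I would invoke the fact, noted in the introduction, that $C_G(u) \subseteq Sol_G(u)$ for every $u \in G$; this is immediate because if $uv = vu$ then $\langle u, v\rangle$ is abelian, hence solvable. Consequently $|C_G(u)| \leq |Sol_G(u)|$ for all $u$, and summing over $u \in G$ gives
\[
\underset{u \in G}{\sum}|C_G(u)| \leq \underset{u \in G}{\sum}|Sol_G(u)|,
\]
which upon dividing by $|G|^2$ yields $\Pr(G) \leq P_s(G)$. This establishes the inequality in the statement.

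For the equality case, the inequality above is an equality if and only if $|C_G(u)| = |Sol_G(u)|$ for every $u \in G$, and since $C_G(u) \subseteq Sol_G(u)$ this forces $C_G(u) = Sol_G(u)$ for all $u$. If $G$ is solvable then $Sol(G) = G$, so $\langle u, v\rangle$ is solvable for all pairs, giving $Sol_G(u) = G$ for every $u$; but then $P_s(G) = 1 = \Pr(G)$ only when $C_G(u) = G$ for all $u$, i.e. $G$ is abelian. This reveals that I must be careful: the claim as literally stated should read that equality holds precisely when $C_G(u) = Sol_G(u)$ for all $u$, and I expect the intended characterization is that $G$ is solvable, using the reverse direction that $Sol_G(u) = C_G(u)$ for all $u$ forces solvability.

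The main obstacle will be the equality direction. For the forward implication, if $G$ is solvable then every $Sol_G(u) = G$, and one checks this against $\Pr(G)$. For the reverse, I would argue that if $G$ is non-solvable, then there exists $u$ with $Sol_G(u) \supsetneq C_G(u)$: indeed, taking $u \in G \setminus Sol(G)$ supplied by Lemma \ref{lem-x-x^2}, the element $u^2$ lies in $Sol_G(u)$ (as $\langle u, u^2\rangle = \langle u\rangle$ is cyclic) and one can produce an element of $Sol_G(u)$ not centralizing $u$, forcing a strict inequality at that $u$ and hence $P_s(G) > \Pr(G)$. I would therefore verify that the strict containment $C_G(u) \subsetneq Sol_G(u)$ holds for at least one $u$ whenever $G$ is non-solvable, which is the crux of the equivalence.
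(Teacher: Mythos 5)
Your proof of the inequality is exactly the paper's: both express $\Pr(G)=\frac{1}{|G|^2}\sum_{u\in G}|C_G(u)|$ and $P_s(G)=\frac{1}{|G|^2}\sum_{u\in G}|Sol_G(u)|$ and compare term by term via $C_G(u)\subseteq Sol_G(u)$. That part is complete and correct.

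On the equality case you have in fact uncovered a genuine defect in the statement, and you should trust the computation you did rather than try to rescue the claim as written. Equality holds if and only if $C_G(u)=Sol_G(u)$ for every $u\in G$. If $G$ is solvable then $Sol_G(u)=G$ for all $u$, so equality would force $C_G(u)=G$ for all $u$, i.e.\ $G$ abelian; concretely $G=S_3$ has $P_s(G)=1>\frac{1}{2}=\Pr(G)$, so the ``if'' direction of the theorem is false as stated. The paper's own proof obscures this in the phrase ``$C_G(u)=Sol_G(u)$ for all $u$, that is, $Sol_G(u)$ is a subgroup of $G$ for all $u$'': only the forward implication of that ``that is'' is valid, and \cite[Proposition 2.22]{hr} is then invoked as though it were an equivalence. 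The direction you were unsure how to finish (equality $\Rightarrow$ $G$ solvable) is the one that does hold, but not by your sketch --- note that $u^2$ always centralizes $u$, so exhibiting $u^2\in Sol_G(u)$ produces nothing outside $C_G(u)$, and you never actually construct the promised non-centralizing element. The clean argument is the paper's: if $C_G(u)=Sol_G(u)$ for all $u$, then every $Sol_G(u)$ is a subgroup, so $G$ is an S-group and hence solvable by \cite[Proposition 2.22]{hr}. Combining this with your observation about the solvable case, the correct statement is that equality holds if and only if $G$ is abelian; what survives of the theorem as intended is the strict inequality $P_s(G)>\Pr(G)$ for every non-abelian (in particular every non-solvable) finite group.
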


\begin{proof}
The result follows from Lemma \ref{formula_Ps(G)} and the fact that $\Pr(G) = \frac{1}{|G|^2}\underset{u \in G}{\sum}|C_G(u)|$ noting that $C_G(u) \subseteq Sol_G(u)$ and so $|Sol_G(u)| \geq |C_G(u)|$ for all $u \in G$.

The equality holds if and only if $C_G(u) = Sol_G(u)$ for all $u \in G$, that is $Sol_G(u)$ is a subgroup of $G$ for all $u \in G$. Hence, by Proposition 2.22 of \cite{hr}, the equality holds if and only if  $G$ is  solvable.
\end{proof}

Let $|E(\Gamma_s(G))|$ be the number of edges of the graph  the non-solvable graph $\Gamma_s(G)$ of $G$. The following theorem gives a relation between $P_s(G)$ and $|E(\Gamma_s(G))|$. 
\begin{thm}\label{connection}
Let $G$ be a finite non-solvable group. Then
\[
2|E(\Gamma_s(G))| = |G|^2P_s(G) + |Sol(G)|^2 + |Sol(G)| - |G|(2|Sol(G)| + 1).
\]
%\begin{align*}
%P_s(G)=\frac{2|Sol(G)|(|G|\setminus |Sol(G)|)+ 2|E(K_{|Sol(G)|})|+2|E(\Gamma_s(G))|+|G|}{|G|^2}
%\end{align*}
\end{thm}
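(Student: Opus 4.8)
The plan is to compute the edge count via the handshaking lemma and then convert the resulting sum of vertex degrees into the quantity $|G|^2 P_s(G)$ using Lemma \ref{formula_Ps(G)}. First I would write $2|E(\Gamma_s(G))| = \underset{u \in G \setminus Sol(G)}{\sum} deg(u)$, the standard degree-sum identity, since the vertex set of $\Gamma_s(G)$ is exactly $G \setminus Sol(G)$. Then by Lemma \ref{degree-lem} each term is $deg(u) = |Sol_G(u)| - |Sol(G)| - 1$, so that
\[
2|E(\Gamma_s(G))| = \underset{u \in G \setminus Sol(G)}{\sum} |Sol_G(u)| - (|Sol(G)| + 1)(|G| - |Sol(G)|),
\]
where I have used $|G \setminus Sol(G)| = |G| - |Sol(G)|$ to collect the constant contributions.

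Next I would relate $\underset{u \in G \setminus Sol(G)}{\sum} |Sol_G(u)|$ to $P_s(G)$. The key observation is that for $u \in Sol(G)$ we have $Sol_G(u) = G$ by the very definition of $Sol(G)$ as the set of elements generating a solvable subgroup with every element of $G$; hence $|Sol_G(u)| = |G|$ for all such $u$ and $\underset{u \in Sol(G)}{\sum} |Sol_G(u)| = |G||Sol(G)|$. Combining this with Lemma \ref{formula_Ps(G)}, which gives $\underset{u \in G}{\sum} |Sol_G(u)| = |G|^2 P_s(G)$, and splitting the sum over $G$ into the parts indexed by $Sol(G)$ and by $G \setminus Sol(G)$, I obtain
\[
\underset{u \in G \setminus Sol(G)}{\sum} |Sol_G(u)| = |G|^2 P_s(G) - |G||Sol(G)|.
\]

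Finally I would substitute this expression into the first display and expand $(|Sol(G)| + 1)(|G| - |Sol(G)|)$, collecting terms to reach the stated identity. Writing $g = |G|$ and $s = |Sol(G)|$ for brevity, the algebra reduces to $2|E(\Gamma_s(G))| = g^2 P_s(G) - gs - (sg - s^2 + g - s) = g^2 P_s(G) + s^2 + s - g(2s+1)$, which is precisely the claimed formula. There is no genuine obstacle here: the entire argument is a bookkeeping exercise, and the only point requiring slight care is the clean splitting of the degree sum, namely remembering that the summation for the handshaking lemma runs only over $G \setminus Sol(G)$ while the summation from Lemma \ref{formula_Ps(G)} runs over all of $G$, so that the $|G||Sol(G)|$ contribution of the radical vertices must be isolated and subtracted correctly.
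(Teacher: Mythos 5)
Your proof is correct, and the algebra checks out: with $g=|G|$, $s=|Sol(G)|$ one indeed gets $g^2P_s(G)-gs-(s+1)(g-s)=g^2P_s(G)+s^2+s-g(2s+1)$. The route is mildly different from the paper's. The paper never invokes the handshaking lemma or Lemma \ref{degree-lem}; instead it writes $2|E(\Gamma_s(G))|$ directly as the number of ordered pairs $(x,y)\in(G\setminus Sol(G))\times(G\setminus Sol(G))$ with $\langle x,y\rangle$ solvable minus the $|G|-|Sol(G)|$ diagonal pairs, and then partitions the full set $\mathcal{S}=\{(x,y)\in G\times G:\langle x,y\rangle \text{ solvable}\}$ into the four blocks determined by whether each coordinate lies in $Sol(G)$, using $|\mathcal{S}|=|G|^2P_s(G)$. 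Your version pushes the same bookkeeping through vertex degrees: the ``$-1$'' in $\deg(u)=|Sol_G(u)|-|Sol(G)|-1$ absorbs the diagonal correction and the ``$-|Sol(G)|$'' absorbs the blocks meeting the radical, while the splitting of $\sum_{u\in G}|Sol_G(u)|$ over $Sol(G)$ and its complement (using $Sol_G(u)=G$ for $u\in Sol(G)$) replaces the paper's set partition. Ultimately both are the same double count of $\mathcal{S}$; yours has the small merit of reusing Lemma \ref{degree-lem} from Section 2, whereas the paper's is self-contained and perhaps more transparent about which pairs are being discarded.
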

\begin{proof}
We have 
\[
2|E(\Gamma_s(G))| = |\{(x, y) \in (G\setminus Sol(G)) \times (G\setminus Sol(G)) : \langle x, y\rangle \text{ is solvable}\}| - |G| + |Sol(G)|.
\]
Also
\begin{align*}
\mathcal{S} &= \{(x,y) \in G \times G : \langle x, y\rangle \text{ is solvable}\}\\
&= Sol(G) \times Sol(G)\quad \sqcup \quad Sol(G) \times (G \setminus Sol(G))\quad \sqcup \quad (G \setminus Sol(G)) \times Sol(G) \\
& \quad\sqcup \quad\{(x, y) \in (G\setminus Sol(G)) \times (G\setminus Sol(G)) : \langle x, y\rangle \text{ is solvable}\}.  
\end{align*}
Therefore
\begin{align*}
|\mathcal{S}|  &= |Sol(G)|^2 + 2|Sol(G)|(|G| - |Sol(G)|) + 2|E(\Gamma_s(G))| + |G| - |Sol(G)|\\
\Longrightarrow |G|^2P_s(G) &= |G|(2|Sol(G)| + 1) - |Sol(G)|^2 - |Sol(G)| + 2|E(\Gamma_s(G))|.  
\end{align*}
Hence, the result follows.
% Consider those pair where one element is from the set $Sol(G)$ and another is from $G\setminus Sol(G)$, so total number of such pairs are $2|Sol(G)|(|G|\setminus |Sol(G)|)$, clearly these pairs generate a solvable group. Since all the possible pairs from the set $Sol(G)$ generates a solvable group therefore total number of such pair will be $2|E(K_{|Sol(G)|})|$. Total number of pairs in $V(\Gamma_s(G))$, which will generate a solvable group is $2|E(\Gamma_s(G))|$. Since $\langle x, x\rangle$ generates a solvable group for all $x\in G$, therefore total number of such pairs are $|G|$.
\end{proof}
%\begin{cor}
%For a non-solvable group $G$,
%\begin{align*}
%2|E(\Gamma_s(G))|=|G|^2P_r(G)+|Sol(G)|(|Sol(G)|+1)-|G|(2|Sol(G)|+1)
%\end{align*}
%\end{cor}
%\begin{proof}
%The Corollary is obvious by putting $|E(K_{|Sol(G)|})|=\frac{|Sol(G)|(|Sol(G)|-1)}{2}$.
%\end{proof}
We conclude this  paper noting that lower bounds for $|E(\Gamma_s(G))|$ can be obtained from Theorem \ref{connection} using the lower bounds given in Theorem \ref{bound-lower}, Theorem \ref{bound-Pr} and the lower bounds for $\Pr(G)$ obtained in \cite{nath2010}. 
%\begin{cor}
%Let $G$ be a finite non-solvable group. Then
%\begin{enumerate}
%\item $2|E(\Gamma_s(G))| > |G|^2\Pr(G) + |Sol(G)|^2 + |Sol(G)| - |G|(2|Sol(G)| + 1)$.
%\item $2|E(\Gamma_s(G))| \geq   |Sol(G)|(|Sol(G)|  - |G| - 1)$
%\end{enumerate}
%\end{cor}
%\begin{proof}
%
%\end{proof}

\end{document}